\documentclass[a4paper]{article}

\usepackage{amsmath}
\usepackage{amssymb}
\usepackage{amsthm}
\usepackage{hyperref}
\usepackage[UKenglish]{isodate}
\usepackage{mathtools}
\usepackage{rotating}
\usepackage{tikz}

\theoremstyle{remark}

\theoremstyle{definition}
\newtheorem{definition}{Definition}[section]
\newtheorem{example}[definition]{Example}

\theoremstyle{theorem}
\newtheorem{lemma}[definition]{Lemma}

\newtheorem{theorem}[definition]{Theorem}
\newtheorem{corollary}[definition]{Corollary}
\newtheorem{conjecture}[definition]{Conjecture}

\newcommand{\duall}[1]{{{}^\vee {#1}}}

\newcommand{\id}{\operatorname{id}}
\newcommand{\inv}{{-1}}
\newcommand{\iso}{\cong}
\newcommand{\munit}{\mathcal{I}}
\newcommand{\op}{\mathrm{op}}
\newcommand{\tensor}{\otimes}

\newcommand{\Cat}{\textsc{Cat}}
\newcommand{\Rep}{\textsc{Rep}}
\newcommand{\Set}{\textsc{Set}}
\newcommand{\Vect}{\textsc{Vect}}

\newcommand{\C}{\mathcal{C}}
\newcommand{\J}{\mathcal{J}}
\newcommand{\M}{\mathcal{M}}
\newcommand{\X}{\mathcal{X}}

\tikzstyle{compactarrow}=[rotate=30, draw, fill=white]
\tikzstyle{compactobject}=[rotate=30]
\tikzstyle{equal}=[double equal sign distance]

\overfullrule=12pt

\hyphenation{mon-oid-al semi-di-rect}

\begin{document}

\title{Semidirect Products of Monoidal Categories}

\author{Ben Fuller}

\maketitle

\begin{abstract}
	We introduce semidirect products of skew monoidal categories as a categorification of semidirect products of monoids (or, perhaps more familiarly, of groups).
	We also discuss how this construction interacts with monoidal, autonomous and closed monoidal structures.
	We end by producing some examples of monoidal categories which are left closed but not right closed.
\end{abstract}

\tableofcontents

\section{Introduction} \label{section:introduction}

In this paper, we introduce semidirect products of skew monoidal categories as a categorification of semidirect products of monoids (or, perhaps more familiarly, of groups).
Skew monoidal categories are like ordinary monoidal categories, except that the associator and unitors are not required to be invertible.
They were first introduced by Szlach\'anyi \cite{Szlachányi2012} as a method of describing right bialgebroids over a ring,
and have subsequently been studied by others, such as Lack and Street (e.g.\ \cite{LackStreet2012} \cite{LackStreet2014}), in other contexts.
In this case, it turns out that a trivial semidirect product skew monoidal category of the form $\{\star\} \ltimes \C$ results in what is referred to by Szlach\'anyi as the skew monoidal category `corepresented' by a lax monoidal comonad $T \colon \C \rightarrow \C$.
This skew monoidal category is obtained by defining a new tensor product, $\hat{\tensor}$, in terms of the old tensor product, $\tensor$, as follows.
\[ A \mathbin{\hat{\tensor}} B = T (A) \tensor B \]

We also discuss how this construction interacts with monoidal, autonomous and closed monoidal structures.
One example we describe is the following semidirect product monoidal category, $\Set^\op \ltimes \Set$.
An object, denoted $\langle X, C \rangle$, is a pair of sets, $X$ and $C$.
The tensor product is defined as follows.
\[ \langle X, A \rangle \tensor \langle Y, B \rangle = \langle X \times Y, [Y, A] \times B \rangle \]

We end by producing some examples of monoidal categories which are left closed but not right closed.
One such example is the following semidirect product left closed monoidal category, $\Set \ltimes \Set$.
An object, denoted $\langle X, C \rangle$, is a pair of sets, $X$ and $C$.
The tensor product is defined as follows.
\[ \langle X, A \rangle \tensor \langle Y, B \rangle = \langle X \times Y, (A \times Y) + B \rangle \]
The internal hom is defined as follows.
\[ [\langle X, A \rangle, \langle Z, C \rangle] = \langle [X, Z] \times [A, C], C \rangle \]

\section{Background} \label{section:background}

In this section, we will familiarise ourselves with some basic concepts and establish some of the notation used throughout this paper.

We will now give a definition of semidirect products of monoids, before going on to generalise this to semidirect products of monoidal categories in later sections.
The following definition should be familiar to anyone already familiar with semidirect products of groups.
For an overview of semidirect products of groups, see Robinson \cite{Robinson2003}.

\begin{definition}[action of a monoid]
	Let $X$ and $C$ be monoids.
	A right action (which we will simply refer to as an action) of $X$ on $C$ is a monoid homomorphism $\Gamma$ from $X$ to $[C, C]$,
	the monoid of monoid endomorphisms on $C$ with product given by function composition in `diagrammatic order'.
	Given an element $x \in X$, we will denote the corresponding endomorphism $\Gamma (x)$ by $(-)^x \colon C \rightarrow C$.
\label{definition:monoid-action} \end{definition}

Assume we have an action of a monoid $X$ on a monoid $C$.
There are several conditions that hold.
Compare these with the structure maps described after Definition (\ref{definition:action}).
Firstly, for any element $x \in X$, the function $(-)^x$ being an endomorphism on $C$ corresponds to the following two conditions.
\[ b^x c^x = (b c)^x \qquad 1_C = 1_C^x \]
Secondly, the function $\Gamma$ being a monoid homomorphism corresponds to the following two conditions.
\[ c^{x y} = (c^x)^y \qquad c^{1_X} = c \]

\begin{definition}[semidirect product of monoids]
	Given an action of a monoid $X$ on a monoid $C$, we can define a `semidirect product' monoid, $X \ltimes C$, as follows.
	The underlying set of $X \ltimes C$ is $X \times C$, and we will denote a pair $(x, c) \in X \ltimes C$ by $\langle x, c \rangle$.
	The product is defined as follows.
	\[ \langle x, b \rangle \langle y, c \rangle = \langle x y, b^y c \rangle \]
	The unit is defined as follows.
	\[ 1_{X \ltimes C} = \langle 1_X, 1_C \rangle \]
\end{definition}

We will now verify that this product is associative and unital.

\begin{lemma}
	The product just defined is associative.
\label{lemma:monoid-associative} \end{lemma}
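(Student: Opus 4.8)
The plan is to expand both sides of the associativity equation
\[ \bigl( \langle x, a \rangle \langle y, b \rangle \bigr) \langle z, c \rangle = \langle x, a \rangle \bigl( \langle y, b \rangle \langle z, c \rangle \bigr) \]
using the definition of the product, and then compare the two components of the resulting pairs.

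For the left-hand side, I would first compute the inner product $\langle x, a \rangle \langle y, b \rangle = \langle x y, a^y b \rangle$, and then multiply on the right by $\langle z, c \rangle$ to obtain $\langle (x y) z, (a^y b)^z c \rangle$. For the right-hand side, I would first compute $\langle y, b \rangle \langle z, c \rangle = \langle y z, b^z c \rangle$, and then multiply on the left by $\langle x, a \rangle$ to obtain $\langle x (y z), a^{y z} (b^z c) \rangle$.

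It then remains to show these two pairs agree. The first components are equal by associativity of the product on $X$. For the second components, I would use the fact that $(-)^z$ is an endomorphism of $C$ to rewrite $(a^y b)^z$ as $(a^y)^z b^z$, then the fact that $\Gamma$ is a monoid homomorphism to rewrite $(a^y)^z$ as $a^{y z}$, and finally associativity of the product on $C$ to pass from $(a^{y z} b^z) c$ to $a^{y z} (b^z c)$. Chaining these rewrites yields the desired equality.

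I do not expect any genuine obstacle: every step is a direct application of one of the conditions listed after Definition (\ref{definition:monoid-action}) together with associativity in $X$ and in $C$. The only point requiring care is invoking the correct action condition in the second component — one needs both that each $(-)^x$ preserves products (to split $(a^y b)^z$) and that $\Gamma$ respects composition (to collapse the iterated action $(a^y)^z$ into $a^{y z}$); using only one of the two would not suffice.
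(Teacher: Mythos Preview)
Your proposal is correct and follows essentially the same approach as the paper: both expand the two bracketings and reduce the equality of second components to the endomorphism property of $(-)^z$, the homomorphism property of $\Gamma$, and associativity in $C$. The only cosmetic difference is that the paper presents the argument as a single chain of equalities from $\langle x,a\rangle(\langle y,b\rangle\langle z,c\rangle)$ to $(\langle x,a\rangle\langle y,b\rangle)\langle z,c\rangle$, whereas you compute each side separately and compare.
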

\begin{proof}
	We must show that the two different ways of bracketing the product of three arbitrary elements $\langle x, a \rangle$, $\langle y, b \rangle$ and $\langle z, c \rangle$ are equal.
	\begin{align*}
		\langle x, a \rangle (\langle y, b \rangle \langle z, c \rangle)
		&= \langle x, a \rangle \langle y z, b^z c \rangle \\
		&= \langle x (y z), a^{y z} (b^z c) \rangle \\
		&= \langle x (y z), (a^y)^z (b^z c) \rangle \\
		&= \langle (x y) z, ((a^y)^z b^z) c \rangle \\
		&= \langle (x y) z, (a^y b)^z c \rangle \\
		&= \langle x y, a^y b \rangle \langle z, c \rangle \\
		&= (\langle x, a \rangle \langle y, b \rangle) \langle z, c \rangle
	\end{align*}
	Compare this with the definition of the associator in Definition (\ref{definition:semidirect-product}).
\end{proof}

\begin{lemma}
	The product just defined is unital.
\label{lemma:monoid-unital} \end{lemma}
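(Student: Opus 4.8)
The plan is to verify the two unit laws directly. I would take an arbitrary element $\langle x, c \rangle \in X \ltimes C$ and compute both $\langle 1_X, 1_C \rangle \langle x, c \rangle$ and $\langle x, c \rangle \langle 1_X, 1_C \rangle$ using the definition of the product, checking in each case that the result is $\langle x, c \rangle$.

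For the left unit law, expanding the definition gives
\[ \langle 1_X, 1_C \rangle \langle x, c \rangle = \langle 1_X \, x, \, 1_C^{\,x} \, c \rangle. \]
The first coordinate simplifies to $x$ by the left unit law in $X$. For the second coordinate I would first use $1_C^{\,x} = 1_C$, which is precisely the condition that each endomorphism $(-)^x$ preserves the unit of $C$ (the second of the two ``endomorphism'' conditions listed after Definition (\ref{definition:monoid-action})), and then the left unit law in $C$ to get $1_C c = c$. For the right unit law, expanding gives
\[ \langle x, c \rangle \langle 1_X, 1_C \rangle = \langle x \, 1_X, \, c^{\,1_X} \, 1_C \rangle, \]
whose first coordinate is $x$ by the right unit law in $X$, and whose second coordinate is handled by first using $c^{\,1_X} = c$ — exactly the condition that $\Gamma$ is unital, i.e.\ $\Gamma(1_X) = \id_C$ (the second of the two ``homomorphism'' conditions) — and then the right unit law in $C$.

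There is no genuine obstacle here; the only thing that needs care is bookkeeping, namely tracking which monoid-action axiom is invoked at each step. It is worth observing that the left and right unit laws of $X \ltimes C$ draw on different ingredients: the left one depends on each $(-)^x$ being unital, whereas the right one depends on $\Gamma$ itself being unital. This asymmetry is the reflection, at the level of monoids, of the fact that in the categorified setting the corresponding unitors need not be invertible; compare the unitors in Definition (\ref{definition:semidirect-product}).
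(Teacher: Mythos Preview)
Your proposal is correct and follows essentially the same approach as the paper: a direct verification of both unit laws by expanding the product and invoking, in turn, the conditions $1_C^{\,x} = 1_C$ and $c^{\,1_X} = c$ together with the unit laws of $X$ and $C$. The paper likewise ends by pointing to the unitors in Definition~(\ref{definition:semidirect-product}), so even your closing remark mirrors the original.
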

\begin{proof}
	We must show that the product of an arbitrary element $\langle x, c \rangle$ with the unit element $1_{X \ltimes C}$, on either side, is equal to $\langle x, c \rangle$.
	\[ 1_{X \ltimes C} \langle x, c \rangle = \langle 1_X, 1_C \rangle \langle x, c \rangle = \langle 1_X x, (1_C)^x c \rangle = \langle 1_X x, 1_C c \rangle = \langle x, c \rangle \]
	\[ \langle x, c \rangle 1_{X \ltimes C} = \langle x, c \rangle \langle 1_X, 1_C \rangle = \langle x 1_X, c^{1_X} 1_C \rangle = \langle x 1_X, c 1_C \rangle = \langle x, c \rangle \]
	Compare this with the definitions of the unitors in Definition (\ref{definition:semidirect-product}).
\end{proof}

Throughout this paper, we will need to be familiar with the definitions of monoidal categories and the various forms of functor between them, as well as their weakened versions, skew monoidal categories.
For an overview of monoidal categories, see Leinster \cite{Leinster2003}, \S 1.2.
For an overview of skew monoidal categories, see Szlach\'anyi \cite{Szlachányi2012}.

We will use the term `skew monoidal category' to mean what is referred to by Szlach\'anyi as a `right-monoidal category'.
We will use the term `lax monoidal functor' to mean either what is referred to by Leinster as a `lax monoidal functor' or what is referred to by Szlach\'anyi as a `right-monoidal functor'.
We will use the term `oplax monoidal functor' to mean either what is referred to by Leinster as a `colax monoidal functor' or what is referred to by Szlach\'anyi as a `right-opmonoidal functor'.

In monoidal and skew monoidal categories, we will usually denote the binary tensor product by $\tensor$, the monoidal unit by $\munit$, and the associator and unitors, whether or not they are invertible, as follows.
\[ \alpha_{A, B, C} \colon A \tensor (B \tensor C) \rightarrow (A \tensor B) \tensor C \]
\[ \lambda_A \colon A \rightarrow \munit \tensor A \qquad \rho_A \colon A \tensor \munit \rightarrow A \]

\section{Actions} \label{section:actions}

In this section, we will explain how to categorify the notion of actions of monoids to actions of skew monoidal categories.
We will then spend some time going through, in detail, all of the data making up such an action.
We will assume familiarity with the concepts of strong, lax and oplax monoidal functors, and monoidal natural transformations.

As is often the case when categorifying, there are some choices to be made as to the direction certain morphisms should take and whether or not they should be invertible.
We will focus on one such choice, which happens to be convenient for our purposes, and call it simply a weak action.

\begin{definition}[weak action]
	Let $\X$ and $\C$ be skew monoidal categories.
	A `weak action' of $\X$ on $\C$ is an oplax monoidal functor $\Gamma$ from $\X$ to $[\C, \C]_\text{lax}$,
	the strict monoidal category of lax monoidal endofunctors on $\C$ and monoidal natural transformations between them with tensor product given by functor composition in `diagrammatic order'.
\label{definition:action} \end{definition}

There is quite a bit of data involved in this definition, so we will spend some time going through the structure maps involved.
Compare these with the conditions described after Definition (\ref{definition:monoid-action}).

Firstly, for every object $X \in \X$, we have a lax monoidal endofunctor on $\C$, denoted as follows.
\[ (-)^X \colon \C \rightarrow \C \]
We will denote the structure maps for this lax monoidal endofunctor as follows.
\[ \varphi^X_{B, C} \colon B^X \tensor C^X \rightarrow (B \tensor C)^X \qquad \varphi^X \colon \munit_\C \rightarrow \munit_\C^X \]
The conditions that these must satisfy are that the following three diagrams must commute.
\[ \begin{tikzpicture}[xscale=3, yscale=-2]
	\node (NW) at (-1, -1) {$A^X \tensor (B^X \tensor C^X)$};
	\node (NE) at ( 1, -1) {$(A^X \tensor B^X) \tensor C^X$};
	\node (WW) at (-1,  0) {$A^X \tensor (B \tensor C)^X$};
	\node (EE) at ( 1,  0) {$(A \tensor B)^X \tensor C^X$};
	\node (SW) at (-1,  1) {$(A \tensor (B \tensor C))^X$};
	\node (SE) at ( 1,  1) {$((A \tensor B) \tensor C)^X$};
	\draw[->] (NW) to node[auto] {$\alpha_{A^X, B^X, C^X}$} (NE);
	\draw[->] (NW) to node[auto, swap] {$A^X \tensor \varphi^X_{B, C}$} (WW);
	\draw[->] (NE) to node[auto] {$\varphi^X_{A, B} \tensor C^X$} (EE);
	\draw[->] (WW) to node[auto, swap] {$\varphi^X_{A, B \tensor C}$} (SW);
	\draw[->] (EE) to node[auto] {$\varphi^X_{A \tensor B, C}$} (SE);
	\draw[->] (SW) to node[auto, swap] {$(\alpha_{A, B, C})^X$} (SE);
\end{tikzpicture} \]
\[ \begin{tikzpicture}[xscale=2, yscale=-2]
	\node (N) at ( 0, -1) {$C^X$};
	\node (W) at (-1,  0) {$\munit_\C \tensor C^X$};
	\node (E) at ( 1,  0) {$(\munit_\C \tensor C)^X$};
	\node (S) at ( 0,  1) {$\munit_\C^X \tensor C^X$};
	\draw[->] (N) to node[auto, swap] {$\lambda_{C^X}$} (W);
	\draw[->] (N) to node[auto] {$(\lambda_C)^X$} (E);
	\draw[->] (W) to node[auto, swap] {$\varphi^X \tensor C^X$} (S);
	\draw[->] (S) to node[auto, swap] {$\varphi^X_{\munit_\C, C}$} (E);
\end{tikzpicture}
\quad
\begin{tikzpicture}[xscale=2, yscale=-2]
	\node (N) at ( 0, -1) {$C^X \tensor \munit_\C^X$};
	\node (W) at (-1,  0) {$C^X \tensor \munit_\C$};
	\node (E) at ( 1,  0) {$(C \tensor \munit_\C)^X$};
	\node (S) at ( 0,  1) {$C^X$};
	\draw[->] (W) to node[auto] {$C^X \tensor \varphi^X$} (N);
	\draw[->] (N) to node[auto] {$\varphi^X_{C, \munit_\C}$} (E);
	\draw[->] (W) to node[auto, swap] {$\rho_{C^X}$} (S);
	\draw[->] (E) to node[auto] {$(\rho_C)^X$} (S);
\end{tikzpicture} \]

Secondly, for every morphism $f \colon X \rightarrow Y$ in $\X$, we have a monoidal natural transformation $(-)^f \colon (-)^X \rightarrow (-)^Y$.
The conditions that this natural transformation must satisfy are that the following two diagrams must commute.
\[ \begin{tikzpicture}[xscale=2, yscale=-1]
	\node (NW) at (-1, -1) {$B^X \tensor C^X$};
	\node (NE) at ( 1, -1) {$B^Y \tensor C^Y$};
	\node (SW) at (-1,  1) {$(B \tensor C)^X$};
	\node (SE) at ( 1,  1) {$(B \tensor C)^Y$};
	\draw[->] (NW) to node[auto] {$B^f \tensor C^f$} (NE);
	\draw[->] (NW) to node[auto, swap] {$\varphi^X_{B, C}$} (SW);
	\draw[->] (NE) to node[auto] {$\varphi^Y_{B, C}$} (SE);
	\draw[->] (SW) to node[auto, swap] {$(B \tensor C)^f$} (SE);
\end{tikzpicture}
\quad
\begin{tikzpicture}[xscale=1.5, yscale=-1]
	\node (NN) at ( 0, -1) {$\munit_\C$};
	\node (SW) at (-1,  1) {$\munit_\C^X$};
	\node (SE) at ( 1,  1) {$\munit_\C^Y$};
	\draw[->] (NN) to node[auto, swap] {$\varphi^X$} (SW);
	\draw[->] (NN) to node[auto] {$\varphi^Y$} (SE);
	\draw[->] (SW) to node[auto, swap] {$\munit_\C^f$} (SE);
\end{tikzpicture} \]

In addition to this, the functor $\Gamma$ itself is oplax monoidal.
We will denote the structure maps for $\Gamma$ as follows.
\[ \psi^{X, Y} \colon (-)^{X \tensor Y} \Rightarrow ((-)^X)^Y \qquad \psi \colon (-)^{\munit_\X} \Rightarrow (-) \]
The conditions that these must satisfy are that the following three diagrams must commute.
Note that the monoidal category $[\C, \C]$, which is the target of $\Gamma$, is strict, so some of the edges in these diagrams are identities.
\[ \begin{tikzpicture}[xscale=3, yscale=-2]
	\node (NW) at (-1, -1) {$C^{X \tensor (Y \tensor Z)}$};
	\node (NE) at ( 1, -1) {$C^{(X \tensor Y) \tensor Z}$};
	\node (WW) at (-1,  0) {$(C^X)^{Y \tensor Z}$};
	\node (EE) at ( 1,  0) {$(C^{X \tensor Y})^Z$};
	\node (SW) at (-1,  1) {$((C^X)^Y)^Z$};
	\node (SE) at ( 1,  1) {$((C^X)^Y)^Z$};
	\draw[->] (NW) to node[auto] {$C^{\alpha_{X, Y, Z}}$} (NE);
	\draw[->] (NW) to node[auto, swap] {$\psi_C^{X, Y \tensor Z}$} (WW);
	\draw[->] (NE) to node[auto] {$\psi_C^{X \tensor Y, Z}$} (EE);
	\draw[->] (WW) to node[auto, swap] {$\psi_{C^X}^{Y, Z}$} (SW);
	\draw[->] (EE) to node[auto] {$(\psi_C^{X, Y})^Z$} (SE);
	\draw[equal] (SW) to (SE);
\end{tikzpicture} \]
\[ \begin{tikzpicture}[xscale=2, yscale=-2]
	\node (N) at ( 0, -1) {$C^X$};
	\node (W) at (-1,  0) {$C^{\munit_\X \tensor X}$};
	\node (E) at ( 1,  0) {$C^X$};
	\node (S) at ( 0,  1) {$(C^{\munit_\X})^X$};
	\draw[->] (N) to node[auto, swap] {$C^{\lambda_X}$} (W);
	\draw[equal] (N) to (E);
	\draw[->] (W) to node[auto, swap] {$\psi_C^{\munit_\X, X}$} (S);
	\draw[->] (S) to node[auto, swap] {$(\psi_C)^X$} (E);
\end{tikzpicture}
\quad
\begin{tikzpicture}[xscale=2, yscale=-2]
	\node (N) at ( 0, -1) {$(C^X)^{\munit_\X}$};
	\node (W) at (-1,  0) {$C^{X \tensor \munit_\X}$};
	\node (E) at ( 1,  0) {$C^X$};
	\node (S) at ( 0,  1) {$C^X$};
	\draw[->] (W) to node[auto] {$\psi_C^{X, \munit_\X}$} (N);
	\draw[->] (N) to node[auto] {$\psi_{C^X}$} (E);
	\draw[->] (W) to node[auto, swap] {$C^{\rho_X}$} (S);
	\draw[equal] (E) to (S);
\end{tikzpicture} \]

Finally, the components of the structure maps for $\Gamma$ are morphisms in $[\C, \C]$, which means that they are monoidal natural transformations.
The structure map
\[ \psi^{X, Y} \colon (-)^{X \tensor Y} \Rightarrow ((-)^X)^Y \]
being a monoidal natural transformation corresponds to the following two diagrams commuting.
\[ \begin{tikzpicture}[xscale=2.5, yscale=-2]
	\node (NW) at (-1, -1) {$B^{X \tensor Y} \tensor C^{X \tensor Y}$};
	\node (NE) at ( 1, -1) {$(B^X)^Y \tensor (C^X)^Y$};
	\node (EE) at ( 1,  0) {$(B^X \tensor C^X)^Y$};
	\node (SW) at (-1,  1) {$(B \tensor C)^{X \tensor Y}$};
	\node (SE) at ( 1,  1) {$((B \tensor C)^X)^Y$};
	\draw[->] (NW) to node[auto] {$\psi_B^{X, Y} \tensor \psi_C^{X, Y}$} (NE);
	\draw[->] (NW) to node[auto, swap] {$\varphi^{X \tensor Y}_{B, C}$} (SW);
	\draw[->] (NE) to node[auto] {$\varphi^Y_{B^X, C^X}$} (EE);
	\draw[->] (EE) to node[auto] {$(\varphi^X_{B, C})^Y$} (SE);
	\draw[->] (SW) to node[auto, swap] {$\psi_{B \tensor C}^{X, Y}$} (SE);
\end{tikzpicture}
\quad
\begin{tikzpicture}[xscale=1, yscale=-2]
	\node (NN) at ( 1, -1) {$\munit_\C$};
	\node (EE) at ( 1,  0) {$\munit_\C^Y$};
	\node (SW) at (-1,  1) {$\munit_\C^{X \tensor Y}$};
	\node (SE) at ( 1,  1) {$(\munit_\C^X)^Y$};
	\draw[->] (NN) to node[auto] {$\varphi^Y$} (EE);
	\draw[->] (NN) to node[auto, swap] {$\varphi^{X \tensor Y}$} (SW);
	\draw[->] (EE) to node[auto] {$(\varphi^X)^Y$} (SE);
	\draw[->] (SW) to node[auto, swap] {$\psi_{\munit_\C}^{X, Y}$} (SE);
\end{tikzpicture} \]
The structure map
\[ \psi \colon (-)^{\munit_\X} \Rightarrow (-) \]
being a monoidal natural transformation corresponds to the following two diagrams commuting.
\[ \begin{tikzpicture}[xscale=2, yscale=-1]
	\node (NW) at (-1, -1) {$B^{\munit_\X} \tensor C^{\munit_\X}$};
	\node (NE) at ( 1, -1) {$B \tensor C$};
	\node (SW) at (-1,  1) {$(B \tensor C)^{\munit_\X}$};
	\node (SE) at ( 1,  1) {$B \tensor C$};
	\draw[->] (NW) to node[auto] {$\psi_B \tensor \psi_C$} (NE);
	\draw[->] (NW) to node[auto, swap] {$\varphi^{\munit_\X}_{B, C}$} (SW);
	\draw[equal] (NE) to (SE);
	\draw[->] (SW) to node[auto, swap] {$\psi_{B \tensor C}$} (SE);
\end{tikzpicture}
\quad
\begin{tikzpicture}[xscale=1.5, yscale=-1]
	\node (NN) at ( 1, -1) {$\munit_\C$};
	\node (SW) at (-1,  1) {$\munit_\C^{\munit_\X}$};
	\node (SE) at ( 1,  1) {$\munit_\C$};
	\draw[->] (NN) to node[auto, swap] {$\varphi^{\munit_\X}$} (SW);
	\draw[equal] (NN) to (SE);
	\draw[->] (SW) to node[auto, swap] {$\psi_{\munit_\C}$} (SE);
\end{tikzpicture} \]

\section{Semidirect Products} \label{section:semidirect-products}

In this section, we will explain how to categorify the notion of semidirect products of monoids to semidirect products of skew monoidal categories.

\begin{definition}[semidirect product]
	Given a weak action of skew monoidal category $\X$ on a skew monoidal category $\C$, we can define a semidirect product skew monoidal category, $\X \ltimes \C$.
	The underlying category of $\X \ltimes \C$ is $\X \times \C$, and we will denote an object $(X, C) \in \X \ltimes \C$ by $\langle X, C \rangle$.
	The tensor product is defined as follows.
	\[ \langle X, B \rangle \tensor \langle Y, C \rangle = \langle X \tensor Y, B^Y \tensor C \rangle \]
	The monoidal unit is defined as follows.
	\[ \munit_{\X \ltimes \C} = \langle \munit_\X, \munit_\C \rangle \]
	In order to define the associator and unitors, it suffices to define their images under the projection functors $\pi_\X$ and $\pi_\C$.
	\[ \X \xleftarrow{\pi_\X} \X \ltimes \C \xrightarrow{\pi_\C} \C \]
	The associator, $\alpha$, is defined as follows.
	The component
	\[ \alpha_{\langle X, A \rangle, \langle Y, B \rangle, \langle Z, C \rangle} \colon \langle X, A \rangle \tensor \langle \langle Y, B \rangle \tensor \langle Z, C \rangle \rangle \rightarrow \langle \langle X, A \rangle \tensor \langle Y, B \rangle \rangle \tensor \langle Z, C \rangle \]
	is the morphism whose images under $\pi_\X$ and $\pi_\C$ are the following pair of morphisms, respectively.
	\[ X \tensor (Y \tensor Z) \xrightarrow{\alpha_{X, Y, Z}} (X \tensor Y) \tensor Z \]
	\begin{align*}
		A^{Y \tensor Z} \tensor (B^Z \tensor C)
		&\xrightarrow{\psi_A^{Y, Z} \tensor (B^Z \tensor C)} (A^Y)^Z \tensor (B^Z \tensor C) \\
		&\xrightarrow{\alpha_{(A^Y)^Z, B^Z, C}} ((A^Y)^Z \tensor B^Z) \tensor C \\
		&\xrightarrow{\varphi_{A^Y, B}^Z \tensor C} (A^Y \tensor B)^Z \tensor C
	\end{align*}
	Compare this with the proof of Lemma (\ref{lemma:monoid-associative}).
	The left unitor, $\lambda$, is defined as follows.
	The component
	\[ \lambda_{\langle X, C \rangle} \colon \langle X, C \rangle \rightarrow \munit_{\X \ltimes \C} \tensor \langle X, C \rangle \]
	is the morphism whose images under $\pi_\X$ and $\pi_\C$ are the following pair of morphisms, respectively.
	\[ X \xrightarrow{\lambda_X} \munit_\X \tensor X \]
	\[ C \xrightarrow{\lambda_C} \munit_\C \tensor C \xrightarrow{\varphi^X \tensor C} \munit_\C^X \tensor C \]
	Compare this with the proof of Lemma (\ref{lemma:monoid-unital}).
	The right unitor, $\rho$, is defined as follows.
	The component
	\[ \rho_{\langle X, C \rangle} \colon \langle X, C \rangle \tensor \munit_{\X \ltimes \C} \rightarrow \langle X, C \rangle \]
	is the morphism whose images under $\pi_\X$ and $\pi_\C$ are the following pair of morphisms, respectively.
	\[ X \tensor \munit_\X \xrightarrow{\rho_X} X \]
	\[ C^{\munit_\X} \tensor \munit_\C \xrightarrow{\psi_C \tensor \munit_\X} C \tensor \munit_\X \xrightarrow{\rho_C} C \]
	Compare this with the proof of Lemma (\ref{lemma:monoid-unital}).
\label{definition:semidirect-product} \end{definition}

In order to show that $\X \ltimes \C$ is a skew monoidal category, we must show that the pentagon identity, the three triangle identities, and the unitor identity hold.
However, in order to show that a diagram commutes in $\X \ltimes \C$, it suffices to show that its images under the projection functors $\pi_\X$ and $\pi_\C$ commute.
And, since the images of the associator and unitors under the projection functor $\pi_\X$ are just the associator and unitors in $\X$, which is a skew monoidal category, the images of the pentagon diagram, the three triangle diagrams and the unitor diagram under the projection functor $\pi_\X$ do commute.
Hence, we only need to show that the images of the pentagon diagram, the three triangle diagrams and the unitor diagram under the projection functor $\pi_\C$ commute.

\begin{lemma}
	The pentagon identity holds.
\end{lemma}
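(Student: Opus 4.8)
The plan is to exploit the reduction noted immediately above the statement: the $\pi_\X$-image of the pentagon diagram is the pentagon in $\X$, which holds because $\X$ is skew monoidal, so it suffices to prove that the $\pi_\C$-image of the pentagon commutes. I fix four objects $\langle W, A \rangle$, $\langle X, B \rangle$, $\langle Y, C \rangle$, $\langle Z, D \rangle$ of $\X \ltimes \C$ and compute the $\C$-components of the two legs of their pentagon. Applying the defining formula $\langle P, M \rangle \tensor \langle Q, N \rangle = \langle P \tensor Q, M^Q \tensor N \rangle$ repeatedly, the initial vertex has $\C$-component $A^{X \tensor (Y \tensor Z)} \tensor (B^{Y \tensor Z} \tensor (C^Z \tensor D))$ and the terminal vertex has $\C$-component $((A^X \tensor B)^Y \tensor C)^Z \tensor D$.

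To expand the two legs, I would combine the explicit description of the $\pi_\C$-image of $\alpha$ from Definition (\ref{definition:semidirect-product}) with the action of $\tensor$ on morphisms: $\langle W, A \rangle \tensor (-)$ sends the $\C$-component $q$ of a morphism lying over $p$ in $\X$ to $A^p \tensor q$, while $(-) \tensor \langle Z, D \rangle$ sends it to $q^Z \tensor D$. Carrying this out turns the short (two-step) leg into a composite built from $\psi_A^{X, Y \tensor Z}$, $\psi_{A^X \tensor B}^{Y, Z}$, two associators of $\C$, $\varphi^{Y \tensor Z}_{A^X, B}$ and $\varphi^Z_{(A^X \tensor B)^Y, C}$, and turns the long (three-step) leg into a composite built from $A^{\alpha_{X, Y, Z}}$, $\psi_B^{Y, Z}$, $\psi_A^{X \tensor Y, Z}$, $(\psi_A^{X, Y})^Z$, three associators of $\C$ (one of them whiskered by $(-)^Z$), $\varphi^Z_{B^Y, C}$, $\varphi^Z_{A^{X \tensor Y}, B^Y \tensor C}$ and $(\varphi^Y_{A^X, B})^Z$, everything suitably tensored with identities. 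The proof is then a single large commutative diagram interpolating these two composites, which I would organise in two stages. First, normalise the $\psi$-terms: apply the first of the coherence diagrams for $\psi$ (the ``$\psi$-pentagon'') at the object $A$ to rewrite $(\psi_A^{X, Y})^Z \circ \psi_A^{X \tensor Y, Z} \circ A^{\alpha_{X, Y, Z}}$ as $\psi_{A^X}^{Y, Z} \circ \psi_A^{X, Y \tensor Z}$, and apply the first of the monoidal-naturality squares for $\psi^{Y, Z}$ to rewrite $\psi_{A^X \tensor B}^{Y, Z} \circ \varphi^{Y \tensor Z}_{A^X, B}$ as $(\varphi^Y_{A^X, B})^Z \circ \varphi^Z_{(A^X)^Y, B^Y} \circ (\psi_{A^X}^{Y, Z} \tensor \psi_B^{Y, Z})$; after this, both legs carry exactly the same $\psi$-data, namely $\psi_A^{X, Y \tensor Z}$, $\psi_{A^X}^{Y, Z}$ and $\psi_B^{Y, Z}$. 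Second, reconcile what remains --- a diagram built entirely from the $\varphi$'s and the associators of $\C$ --- using the first of the coherence diagrams for $\varphi$ (relating $\varphi$ to $\alpha_\C$), possibly after whiskering by $(-)^Z$; naturality of $\alpha_\C$ and naturality of the components $\varphi^Z_{-, -}$ in their two arguments, to commute the $\psi$-whiskerings past associators and $\varphi$'s; the pentagon identity in $\C$ itself (in general applied after whiskering by some $(-)^Z$); and functoriality of the endofunctors $(-)^Z$ together with bifunctoriality of $\tensor$, to recombine whiskered morphisms.

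I expect the main obstacle to be purely organisational: the expanded pentagon is a sizeable diagram with heavily decorated subscripts, and the only genuine subtlety is getting the order of rewrites right. In particular, $\psi_{A^X \tensor B}^{Y, Z}$ must be split via the monoidal-naturality square \emph{before} one tries to invoke the $\C$-pentagon --- in the other order one is left with cells that do not visibly commute. I would therefore first reduce every $\psi$ whose superscript is a tensor product (via the $\psi$-pentagon) and every $\psi$ whose subscript is a tensor product (via the monoidal-naturality square) to ``atomic'' occurrences, and only then bring in the $\C$-pentagon, the $\varphi$--$\alpha$ coherence and naturality to close the diagram. Checking the diagram one ``layer'' at a time --- first peeling off the outermost $(-)^Z$, then the $(-)^Y$, then working in $\C$ at the base --- should keep the bookkeeping manageable; the computation is the exact categorification of the chain of rewrites in the proof of Lemma (\ref{lemma:monoid-associative}) carried out one step further, for four elements instead of three.
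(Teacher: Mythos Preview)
Your proposal is correct and follows essentially the same approach as the paper: the paper's proof consists of exactly the large commutative diagram you describe, decomposed into cells that commute by the $\psi$-pentagon, the monoidal-naturality square for $\psi^{Y,Z}$, the $\varphi$--$\alpha$ hexagon for $(-)^Z$, the pentagon in $\C$, and naturality/bifunctoriality squares. Your exposition in fact makes explicit what the paper leaves to the reader, since the paper merely displays the diagram with unlabelled arrows and no commentary on which axiom justifies each cell.
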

\begin{proof}
	We will consider the pentagon identity as it applies to the four objects $\langle W, A \rangle$, $\langle X, B \rangle$, $\langle Y, C \rangle$ and $\langle Z, D \rangle$.
	Throughout this proof, we will denote tensor products by juxtaposition, for notational convenience.
	In this case, the five different bracketings which form the vertices of the pentagon are as follows.
	\[ \langle W, A \rangle (\langle X, B \rangle (\langle Y, C \rangle \langle Z, D \rangle)) = \langle W (X (Y Z)), A^{X (Y Z)} (B^{Y Z} (C^Z D)) \rangle \]
	\[ \langle W, A \rangle ((\langle X, B \rangle \langle Y, C \rangle) \langle Z, D \rangle) = \langle W ((X Y) Z), A^{(X Y) Z} ((B^Y C)^Z D) \rangle \]
	\[ (\langle W, A \rangle (\langle X, B \rangle \langle Y, C \rangle)) \langle Z, D \rangle = \langle (W (X Y)) Z, ((A^{X Y}) (B^Y C))^Z D \rangle \]
	\[ ((\langle W, A \rangle \langle X, B \rangle) \langle Y, C \rangle) \langle Z, D \rangle = \langle ((W X) Y) Z, (((A^X B)^Y) C)^Z D \rangle \]
	\[ (\langle W, A \rangle \langle X, B \rangle) (\langle Y, C \rangle \langle Z, D \rangle) = \langle (W X) (Y Z), (A^X B)^{Y Z} (C^Z D) \rangle \]
	The image under $\pi_\C$ of the pentagon diagram is shown, without the arrows labelled, in Figure (\ref{figure:pentagon}).
	Hopefully, the contents of the arrows are clear from context.
	\begin{sidewaysfigure}
		\[ \begin{tikzpicture}[xscale=2.5, yscale=-2.5]
			\node[compactobject] (00) at (0, 0) {$A^{X (Y Z)} (B^{Y Z} (C^Z D))$};
			\node[compactobject] (10) at (1, 0) {$(A^X)^{Y Z} (B^{Y Z} (C^Z D))$};
			\node[compactobject] (20) at (2, 0) {$((A^X)^{Y Z} B^{Y Z}) (C^Z D)$};
			\node[compactobject] (30) at (3, 0) {$(A^X B)^{Y Z} (C^Z D)$};
			\node[compactobject] (40) at (4, 0) {$((A^X B)^Y)^Z (C^Z D)$};
			\node[compactobject] (50) at (5, 0) {$(((A^X B)^Y)^Z C^Z) D$};
			\node[compactobject] (60) at (6, 0) {$(((A^X B)^Y) C)^Z D$};
			\node[compactobject] (01) at (0, 1) {$A^{X (Y Z)} ((B^Y)^Z (C^Z D))$};
			\node[compactobject] (11) at (1, 1) {$(A^X)^{Y Z} ((B^Y)^Z (C^Z D))$};
			\node[compactobject] (31) at (3, 1) {$((A^X)^{Y Z} (B^Y)^Z) (C^Z D)$};
			\node[compactobject] (41) at (4, 1) {$((A^X)^Y B^Y)^Z (C^Z D)$};
			\node[compactobject] (51) at (5, 1) {$(((A^X)^Y B^Y)^Z C^Z) D$};
			\node[compactobject] (61) at (6, 1) {$(((A^X)^Y B^Y) C)^Z D$};
			\node[compactobject] (22) at (2, 2) {$((A^X)^Y)^Z ((B^Y)^Z (C^Z D))$};
			\node[compactobject] (42) at (4, 2) {$(((A^X)^Y)^Z (B^Y)^Z) (C^Z D)$};
			\node[compactobject] (52) at (5, 2) {$((((A^X)^Y)^Z (B^Y)^Z) C^Z) D$};
			\node[compactobject] (03) at (0, 3) {$A^{X (Y Z)} (((B^Y)^Z C^Z) D)$};
			\node[compactobject] (13) at (1, 3) {$(A^X)^{Y Z} (((B^Y)^Z C^Z) D)$};
			\node[compactobject] (23) at (2, 3) {$((A^X)^Y)^Z (((B^Y)^Z C^Z) D)$};
			\node[compactobject] (53) at (5, 3) {$(((A^X)^Y)^Z ((B^Y)^Z C^Z)) D$};
			\node[compactobject] (04) at (0, 4) {$A^{X (Y Z)} ((B^Y C)^Z D)$};
			\node[compactobject] (14) at (1, 4) {$(A^X)^{Y Z} ((B^Y C)^Z D)$};
			\node[compactobject] (24) at (2, 4) {$((A^X)^Y)^Z ((B^Y C)^Z D)$};
			\node[compactobject] (54) at (5, 4) {$(((A^X)^Y)^Z (B^Y C)^Z) D$};
			\node[compactobject] (64) at (6, 4) {$(((A^X)^Y) (B^Y C))^Z D$};
			\node[compactobject] (05) at (0, 5) {$A^{(X Y) Z} ((B^Y C)^Z D)$};
			\node[compactobject] (25) at (2, 5) {$(A^{X Y})^Z ((B^Y C)^Z D)$};
			\node[compactobject] (55) at (5, 5) {$((A^{X Y})^Z (B^Y C)^Z) D$};
			\node[compactobject] (65) at (6, 5) {$((A^{X Y}) (B^Y C))^Z D$};
			\draw[->] (00) to (01); \draw[->] (01) to (03); \draw[->] (03) to (04); \draw[->] (04) to (05);
			\draw[->] (10) to (11); \draw[->] (11) to (13); \draw[->] (13) to (14);
			\draw[->] (22) to (23); \draw[->] (23) to (24); \draw[<-] (24) to (25);
			\draw[<-] (40) to (41); \draw[<-] (41) to (42);
			\draw[<-] (50) to (51); \draw[<-] (51) to (52); \draw[<-] (52) to (53); \draw[->] (53) to (54); \draw[<-] (54) to (55);
			\draw[<-] (60) to (61); \draw[<-] (61) to (64); \draw[<-] (64) to (65);
			\draw[->] (00) to (10); \draw[->] (10) to (20); \draw[->] (20) to (30); \draw[->] (30) to (40); \draw[->] (40) to (50); \draw[->] (50) to (60);
			\draw[->] (01) to (11); \draw[->] (11) to (31); \draw[->] (41) to (51); \draw[->] (51) to (61);
			\draw[->] (22) to (42); \draw[->] (42) to (52);
			\draw[->] (03) to (13); \draw[->] (13) to (23); \draw[->] (23) to (53);
			\draw[->] (04) to (14); \draw[->] (14) to (24); \draw[->] (24) to (54); \draw[->] (54) to (64);
			\draw[->] (05) to (25); \draw[->] (25) to (55); \draw[->] (55) to (65);
			\draw[->] (11) to (22);
			\draw[->] (20) to (31); \draw[->] (31) to (42);
		\end{tikzpicture} \]
		\caption{Proof of the pentagon identity}
		\label{figure:pentagon}
	\end{sidewaysfigure}
\end{proof}

\begin{lemma}
	The triangle identities hold.
\end{lemma}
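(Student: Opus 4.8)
The plan is to argue exactly as in the proof of the pentagon identity. A square commutes in $\X \ltimes \C$ as soon as its images under $\pi_\X$ and $\pi_\C$ commute, and the $\pi_\X$-image of each of the three triangle diagrams is a triangle identity in $\X$, which holds by hypothesis; so it suffices to check the three $\pi_\C$-images, one for each position the monoidal unit $\munit_{\X \ltimes \C} = \langle \munit_\X, \munit_\C \rangle$ can occupy. For each, I would fix objects $\langle X, A \rangle$ and $\langle Y, B \rangle$, substitute $\munit_{\X \ltimes \C}$ for the remaining argument, and expand the $\pi_\C$-components of the associator and unitors using Definition (\ref{definition:semidirect-product}); this turns the identity to be proved into an explicit equality of two morphisms of $\C$, which I would then display as a commuting diagram in $\C$ in the style of Figure (\ref{figure:pentagon}), every region of which commutes by one of: an axiom of the weak action (those following Definition (\ref{definition:action})), a skew-monoidal axiom of $\C$ itself, naturality of $\alpha$, $\lambda$, $\rho$, of the $\varphi^{(-)}$, or of $\psi^{(-),(-)}$ or $\psi$, or the interchange law in $\C$.

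Concretely, for the unit on the left, $\alpha_{\munit_{\X \ltimes \C}, \langle X, A \rangle, \langle Y, B \rangle} \circ \lambda_{\langle X, A \rangle \tensor \langle Y, B \rangle} = \lambda_{\langle X, A \rangle} \tensor \langle Y, B \rangle$, the $\pi_\C$-image is a diagram between $A^Y \tensor B$ and $(\munit_\C^X \tensor A)^Y \tensor B$ that I would close up using the left-unitor axiom of the lax functor $(-)^Y$, naturality of $\varphi^Y$ in its first variable, the unit component of the monoidal naturality of $\psi^{X, Y}$ (which gives $(\varphi^X)^Y \circ \varphi^Y = \psi_{\munit_\C}^{X, Y} \circ \varphi^{X \tensor Y}$), naturality of $\alpha$, and the left-unit triangle of $\C$ applied to $A^Y$ and $B$. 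For the unit on the right, $\rho_{\langle X, A \rangle \tensor \langle Y, B \rangle} \circ \alpha_{\langle X, A \rangle, \langle Y, B \rangle, \munit_{\X \ltimes \C}} = \langle X, A \rangle \tensor \rho_{\langle Y, B \rangle}$, the corresponding $\C$-diagram closes up using the fact that $\psi$ is monoidal (so $\psi_{B \tensor C} \circ \varphi^{\munit_\X}_{B, C} = \psi_B \tensor \psi_C$), the coherence of $\Gamma$ relating $\psi$ to $\rho_\X$ (giving $\psi_{C^X} \circ \psi_C^{X, \munit_\X} = C^{\rho_X}$), naturality of $\rho$, and the right-unit triangle of $\C$.

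The remaining case, the unit in the middle, $(\rho_{\langle X, A \rangle} \tensor \langle Y, B \rangle) \circ \alpha_{\langle X, A \rangle, \munit_{\X \ltimes \C}, \langle Y, B \rangle} \circ (\langle X, A \rangle \tensor \lambda_{\langle Y, B \rangle}) = \id$, is the one I expect to be the main obstacle, because its right-hand side is an identity, so the whole $\pi_\C$-diagram has to collapse; making it do so requires, besides naturality of $\alpha$ and the interchange law, the coherence of $\Gamma$ relating $\psi$ to $\lambda_\X$ (giving $(\psi_A)^Y \circ \psi_A^{\munit_\X, Y} \circ A^{\lambda_Y} = \id$), naturality of $\varphi^Y$, the right-unitor axiom of $(-)^Y$ in the form $(\rho_A)^Y \circ \varphi^Y_{A, \munit_\C} \circ (A^Y \tensor \varphi^Y) = \rho_{A^Y}$, and finally the unit-in-the-middle triangle of $\C$ applied to $A^Y$ and $B$. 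The genuinely delicate point throughout — and the place where errors are easy — is the bookkeeping of the $\pi_\C$-component of a tensor product of morphisms of the form $\langle X, A \rangle \tensor f$ or $f \tensor \langle Y, B \rangle$, since one must keep track of which instance of the functor $(-)^{(-)}$, and of the natural transformations $(-)^g$, is being applied to which morphism of $\C$; once the expansions are organised correctly, each of the three diagrams commutes by the regions named above.
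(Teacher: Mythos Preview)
Your proposal is correct and follows essentially the same approach as the paper: reduce to the $\pi_\C$-images and verify each of the three triangle diagrams in $\C$ region by region, using precisely the axioms you name. In fact your account is more explicit than the paper's, which simply displays the three subdivided diagrams (Figures~\ref{figure:triangle-l}--\ref{figure:triangle-r}) without annotating which axiom closes each region; your identification of the key ingredients---the unit part of the monoidality of $\psi^{X,Y}$ for the left triangle, the $\lambda_\X$-coherence of $\Gamma$ together with the right-unitor axiom of $(-)^Y$ for the middle triangle, and the monoidality of $\psi$ together with the $\rho_\X$-coherence of $\Gamma$ for the right triangle---matches the paper's diagrams exactly.
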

\begin{proof}
	We will consider the triangle identities as they apply to the two objects $\langle X, B \rangle$ and $\langle Y, C \rangle$.
	Throughout this proof, we will denote tensor products by juxtaposition, for notational convenience.

	In the case of the first triangle identity, the three different bracketings which form the vertices of the triangle are as follows.
	\[ \munit_{\X \ltimes \C} (\langle X, B \rangle \langle Y, C \rangle) = \langle \munit_\X (X Y), \munit_\C^{X Y} (B^Y C) \rangle \]
	\[ (\munit_{\X \ltimes \C} \langle X, B \rangle) \langle Y, C \rangle = \langle (\munit_\X X) Y, (\munit_\C^X B)^Y C \rangle \]
	\[ \langle X, B \rangle \langle Y, C \rangle = \langle X Y, B^Y C \rangle \]
	The image under $\pi_\C$ of the first triangle diagram is shown in Figure (\ref{figure:triangle-l}).
	\begin{sidewaysfigure}
		\[ \begin{tikzpicture}[xscale=5, yscale=-3]
			\node[compactobject] (00) at (0, 0) {$\munit_\C^{X Y} (B^Y C)$};
			\node[compactobject] (10) at (1, 0) {$(\munit_\C^X)^Y (B^Y C)$};
			\node[compactobject] (20) at (2, 0) {$((\munit_\C^X)^Y B^Y) C$};
			\node[compactobject] (30) at (3, 0) {$(\munit_\C^X B)^Y C$};
			\node[compactobject] (11) at (1, 1) {$\munit_\C^Y (B^Y C)$};
			\node[compactobject] (21) at (2, 1) {$(\munit_\C^Y B^Y) C$};
			\node[compactobject] (31) at (3, 1) {$(\munit_\C B)^Y C$};
			\node[compactobject] (12) at (1, 2) {$\munit_\C (B^Y C)$};
			\node[compactobject] (22) at (2, 2) {$(\munit_\C B^Y) C$};
			\node[compactobject] (13) at (1, 3) {$B^Y C$};
			\node[compactobject] (23) at (2, 3) {$B^Y C$};
			\draw[->] (00) to node[compactarrow] {$\psi_{\munit_\C}^{X, Y} (B^Y C)$} (10); \draw[->] (10) to node[compactarrow] {$\alpha_{(\munit_\C^X)^Y, B^Y, C}$} (20); \draw[->] (20) to node[compactarrow] {$\varphi^Y_{\munit_\C^X, B} C$} (30);
			\draw[->] (11) to node[compactarrow] {$\alpha_{\munit_\C^Y, B^Y, C}$} (21); \draw[->] (21) to node[compactarrow] {$\varphi^Y_{\munit_\C, B} C$} (31);
			\draw[->] (12) to node[compactarrow] {$\alpha_{\munit_\C, B^Y, C}$} (22);
			\draw[equal] (13) to (23);
			\draw[<-] (10) to node[compactarrow] {$(\varphi^X)^Y (B^Y C)$} (11); \draw[<-] (11) to node[compactarrow] {$\varphi^Y (B^Y C)$} (12); \draw[<-] (12) to node[compactarrow] {$\lambda_{B^Y C}$} (13);
			\draw[<-] (20) to node[compactarrow] {$((\varphi^X)^Y B^Y) C$} (21); \draw[<-] (21) to node[compactarrow] {$(\varphi^Y B^Y) C$} (22); \draw[<-] (22) to node[compactarrow] {$\lambda_{B^Y} C$} (23);
			\draw[<-] (30) to node[compactarrow] {$(\varphi^X B)^Y C$} (31);
			\draw[->] (12) to node[compactarrow] {$\varphi^{X Y} (B^Y C)$} (00);
			\draw[->] (23) to node[compactarrow] {$(\lambda_B)^Y C$} (31);
		\end{tikzpicture} \]
		\caption{Proof of the first triangle identity}
		\label{figure:triangle-l}
	\end{sidewaysfigure}

	In the case of the second triangle identity, the three different bracketings which form the vertices of the triangle are as follows.
	\[ \langle X, B \rangle (\munit_{\X \ltimes \C} \langle Y, C \rangle) = \langle X (\munit_\X Y), B^{\munit_\X Y} (\munit_\C^Y C) \rangle \]
	\[ (\langle X, B \rangle \munit_{\X \ltimes \C}) \langle Y, C \rangle = \langle (X \munit_\X) Y, (B^{\munit_\X} \munit_\C)^Y C \rangle \]
	\[ \langle X, B \rangle \langle Y, C \rangle = \langle X Y, B^Y C \rangle \]
	The image under $\pi_\C$ of the second triangle diagram is shown in Figure (\ref{figure:triangle-m}).
	\begin{sidewaysfigure}
		\[ \begin{tikzpicture}[xscale=5, yscale=-3]
			\node[compactobject] (00) at (0, 0) {$B^{\munit_\X Y} (\munit_\C^Y C)$};
			\node[compactobject] (10) at (1, 0) {$(B^{\munit_\X})^Y (\munit_\C^Y C)$};
			\node[compactobject] (20) at (2, 0) {$((B^{\munit_\X})^Y \munit_\C^Y) C$};
			\node[compactobject] (30) at (3, 0) {$(B^{\munit_\X} \munit_\C)^Y C$};
			\node[compactobject] (11) at (1, 1) {$B^Y (\munit_\C^Y C)$};
			\node[compactobject] (21) at (2, 1) {$(B^Y \munit_\C^Y) C$};
			\node[compactobject] (31) at (3, 1) {$(B \munit_\C)^Y C$};
			\node[compactobject] (12) at (1, 2) {$B^Y (\munit_\C C)$};
			\node[compactobject] (22) at (2, 2) {$(B^Y \munit_\C) C$};
			\node[compactobject] (13) at (1, 3) {$B^Y C$};
			\node[compactobject] (23) at (2, 3) {$B^Y C$};
			\draw[->] (00) to node[compactarrow] {$\psi_B^{\munit_\X, Y} (\munit_\C^Y C)$} (10); \draw[->] (10) to node[compactarrow] {$\alpha_{(B^{\munit_\X})^ Y, \munit_\C^Y, C}$} (20); \draw[->] (20) to node[compactarrow] {$\varphi^Y_{B^{\munit_\X}, \munit_\C} C$} (30);
			\draw[->] (11) to node[compactarrow] {$\alpha_{B^Y, \munit_\C^Y, C}$} (21); \draw[->] (21) to node[compactarrow] {$\varphi^Y_{B, \munit_C} C$} (31);
			\draw[->] (12) to node[compactarrow] {$\alpha_{B^Y, \munit_C, C}$} (22);
			\draw[equal] (13) to (23);
			\draw[->] (10) to node[compactarrow] {$(\psi_B)^Y (\munit_\C^Y C)$} (11); \draw[<-] (11) to node[compactarrow] {$B^Y (\varphi^Y C)$} (12); \draw[<-] (12) to node[compactarrow] {$B^Y \lambda_C$} (13);
			\draw[->] (20) to node[compactarrow] {$((\psi_B)^Y \munit_\C^Y) C$} (21); \draw[<-] (21) to node[compactarrow] {$(B^Y \varphi^Y) C$} (22); \draw[->] (22) to node[compactarrow] {$\rho_{B^Y} C$} (23);
			\draw[->] (30) to node[compactarrow] {$(\psi_B \munit_\C)^Y C$} (31);
			\draw[->] (11) to node[compactarrow] {$B^{\lambda_Y} (\munit_\C^Y C)$} (00);
			\draw[->] (31) to node[compactarrow] {$(\rho_B)^Y C$} (23);
		\end{tikzpicture} \]
		\caption{Proof of the second triangle identity}
		\label{figure:triangle-m}
	\end{sidewaysfigure}

	In the case of the third triangle identity, the three different bracketings which form the vertices of the triangle are as follows.
	\[ \langle X, B \rangle (\langle Y, C \rangle \munit_{\X \ltimes \C}) = \langle X (Y \munit_\X), A^{X \munit_\X} (B^{\munit_\X} \munit_\C) \rangle \]
	\[ (\langle X, B \rangle \langle Y, C \rangle) \munit_{\X \ltimes \C} = \langle (X Y) \munit_\X, (A^X B)^{\munit_\X} \munit_\C \rangle \]
	\[ \langle X, B \rangle \langle Y, C \rangle = \langle X Y, B^Y C \rangle \]
	The image under $\pi_\C$ of the third triangle diagram is shown in Figure (\ref{figure:triangle-r}).
	\begin{sidewaysfigure}
		\[ \begin{tikzpicture}[xscale=5, yscale=-3]
			\node[compactobject] (00) at (0, 0) {$A^{X \munit_\X} (B^{\munit_\X} \munit_\C)$};
			\node[compactobject] (10) at (1, 0) {$(A^X)^{\munit_\X} (B^{\munit_\X} \munit_\C)$};
			\node[compactobject] (20) at (2, 0) {$((A^X)^{\munit_\X} B^{\munit_\X}) \munit_\C$};
			\node[compactobject] (30) at (3, 0) {$(A^X B)^{\munit_\X} \munit_\C$};
			\node[compactobject] (01) at (0, 1) {$A^{X \munit_\X} B^{\munit_\X}$};
			\node[compactobject] (11) at (1, 1) {$(A^X)^{\munit_\X} B^{\munit_\X}$};
			\node[compactobject] (21) at (2, 1) {$(A^X)^{\munit_\X} B^{\munit_\X}$};
			\node[compactobject] (31) at (3, 1) {$(A^X B)^{\munit_\X}$};
			\node[compactobject] (02) at (0, 2) {$A^{X \munit_\X} B$};
			\node[compactobject] (12) at (1, 2) {$(A^X)^{\munit_\X} B$};
			\node[compactobject] (22) at (2, 2) {$(A^X)^{\munit_\X} B$};
			\node[compactobject] (13) at (1, 3) {$A^X B$};
			\node[compactobject] (23) at (2, 3) {$A^X B$};
			\draw[->] (00) to node[compactarrow] {$\psi_A^{X, \munit_\X} (B^\munit_\X \munit_\C)$} (10); \draw[->] (10) to node[compactarrow] {$\alpha_{(A^X)^{\munit_\X}, B^\munit_\X, \munit_\C}$} (20); \draw[->] (20) to node[compactarrow] {$\varphi^{\munit_\X}_{A^X, B} \munit_\C$} (30);
			\draw[->] (01) to node[compactarrow] {$\psi_A^{X, \munit_\X} B^{\munit_\X}$} (11); \draw[equal] (11) to (21); \draw[->] (21) to node[compactarrow] {$\varphi^{\munit_\X}_{A^X, B}$} (31);
			\draw[->] (02) to node[compactarrow] {$\psi_A^{X, \munit_\X} B$} (12); \draw[equal] (12) to (22);
			\draw[equal] (13) to (23);
			\draw[->] (00) to node[compactarrow] {$A^{X \munit_\X} \rho_{B^{\munit_\X}}$} (01); \draw[->] (01) to node[compactarrow] {$A^{X \munit_\X} \psi_B$} (02);
			\draw[->] (10) to node[compactarrow] {$(A^X)^{\munit_\X} \rho_{B^{\munit_\X}}$} (11); \draw[->] (11) to node[compactarrow] {$(A^X)^{\munit_\X} \psi_B$} (12); \draw[->] (12) to node[compactarrow] {$\psi_{A^X} B$} (13);
			\draw[->] (20) to node[compactarrow] {$\rho_{(A^X)^{\munit_\X} B^{\munit_\X}}$} (21); \draw[->] (21) to node[compactarrow] {$(A^X)^{\munit_\X} \psi_B$} (22); \draw[->] (22) to node[compactarrow] {$\psi_{A^X} B$} (23);
			\draw[->] (30) to node[compactarrow] {$\rho_{(A^X B)^{\munit_\X}}$} (31);
			\draw[->] (02) to node[compactarrow] {$A^{\rho_X} B$} (13);
			\draw[->] (31) to node[compactarrow] {$\psi_{A^X B}$} (23);
		\end{tikzpicture} \]
		\caption{Proof of the third triangle identity}
		\label{figure:triangle-r}
	\end{sidewaysfigure}
\end{proof}

\begin{lemma}
	The unitor identity holds.
\end{lemma}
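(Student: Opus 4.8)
The plan is to follow the same strategy as for the pentagon and the triangles: a diagram in $\X \ltimes \C$ commutes once its images under $\pi_\X$ and $\pi_\C$ do, and the $\pi_\X$-image of the unitor diagram is exactly the unitor identity of $\X$, which holds by hypothesis, so it suffices to check the $\pi_\C$-image. Recall that the unitor identity asserts $\rho_{\munit_{\X \ltimes \C}} \circ \lambda_{\munit_{\X \ltimes \C}} = \id_{\munit_{\X \ltimes \C}}$, where $\munit_{\X \ltimes \C} = \langle \munit_\X, \munit_\C \rangle$ and the common intermediate object is $\munit_{\X \ltimes \C} \tensor \munit_{\X \ltimes \C} = \langle \munit_\X \tensor \munit_\X,\, \munit_\C^{\munit_\X} \tensor \munit_\C \rangle$.

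First I would read off, from Definition (\ref{definition:semidirect-product}) specialised to $X = \munit_\X$ and $C = \munit_\C$, the $\pi_\C$-images of the two unitors. The left unitor contributes
\[ \munit_\C \xrightarrow{\lambda_{\munit_\C}} \munit_\C \tensor \munit_\C \xrightarrow{\varphi^{\munit_\X} \tensor \munit_\C} \munit_\C^{\munit_\X} \tensor \munit_\C \]
and the right unitor contributes
\[ \munit_\C^{\munit_\X} \tensor \munit_\C \xrightarrow{\psi_{\munit_\C} \tensor \munit_\C} \munit_\C \tensor \munit_\C \xrightarrow{\rho_{\munit_\C}} \munit_\C \]
so the composite to be identified with $\id_{\munit_\C}$ is $\rho_{\munit_\C} \circ (\psi_{\munit_\C} \tensor \munit_\C) \circ (\varphi^{\munit_\X} \tensor \munit_\C) \circ \lambda_{\munit_\C}$.

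The argument then rests on two small moves. By functoriality of $\tensor$, the two middle arrows compose to $(\psi_{\munit_\C} \circ \varphi^{\munit_\X}) \tensor \munit_\C$, and $\psi_{\munit_\C} \circ \varphi^{\munit_\X} = \id_{\munit_\C}$ is precisely the unit-triangle witnessing that $\psi \colon (-)^{\munit_\X} \Rightarrow (-)$ is a monoidal natural transformation (the second of the two diagrams for $\psi$ in Section (\ref{section:actions})). Hence the composite collapses to $\rho_{\munit_\C} \circ \lambda_{\munit_\C}$, which equals $\id_{\munit_\C}$ because $\C$ is itself a skew monoidal category and so satisfies its own unitor identity. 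I do not expect a genuine obstacle here; the only points needing care are invoking the correct compatibility axiom for $\psi$ (the unit-triangle, not the tensor-square one) and keeping track of which monoidal unit, $\munit_\X$ or $\munit_\C$, occupies each tensor factor when reading the $\pi_\C$-images off Definition (\ref{definition:semidirect-product}).
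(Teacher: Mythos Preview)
Your proposal is correct and matches the paper's own proof essentially step for step: the paper presents the $\pi_\C$-image as a small diagram whose inner cell is exactly $\psi_{\munit_\C} \circ \varphi^{\munit_\X} = \id_{\munit_\C}$ (the unit triangle for $\psi$) and whose outer cell is the unitor identity $\rho_{\munit_\C} \circ \lambda_{\munit_\C} = \id_{\munit_\C}$ of $\C$. Your prose argument unwinds precisely these two cells, invoking the correct axiom for $\psi$, so nothing further is needed.
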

\begin{proof}
	The image under $\pi_\C$ of the unitor diagram is the following.
	\[ \begin{tikzpicture}[xscale=2, yscale=-2]
		\node (NW) at (0, 0) {$\munit_\C$};
		\node (WW) at (1, 1) {$\munit_\C \tensor \munit_\C$};
		\node (SS) at (2, 2) {$\munit_\C^{\munit_\X} \tensor \munit_\C$};
		\node (EE) at (3, 1) {$\munit_\C \tensor \munit_\C$};
		\node (NE) at (4, 0) {$\munit_\C$};
		\draw[->] (NW) to node[auto, swap] {$\lambda_{\munit_\C}$} (WW);
		\draw[->] (WW) to node[auto, swap] {$\varphi^{\munit_\X} \tensor \munit_\C$} (SS);
		\draw[->] (SS) to node[auto, swap] {$\psi_{\munit_\C} \tensor \munit_\C$} (EE);
		\draw[->] (EE) to node[auto, swap] {$\rho_{\munit_\C}$} (NE);
		\draw[equal] (NW) to (NE);
		\draw[equal] (WW) to (EE);
	\end{tikzpicture} \]
\end{proof}

We will now give some examples of semidirect product skew monoidal categories.

\begin{example}
	Let $\X = \{\star\}$, the monoidal category with one object and one morphism.
	Let $\C$ be a monoidal category.
	Then a weak action of $\X$ on $\C$ endows the endofunctor $(-)^\star$ with the structure of a lax monoidal comonad on $\C$; in fact, all lax monoidal comonads are of this form.
	Given such a weak action, the resulting semidirect product $\X \ltimes \C$ is a skew monoidal structure on $\{\star\} \times \C \iso \C$, with tensor product defined as follows.
	\[ B \tensor C = B^\star \tensor C \]
	This is what is referred to by Szlach\'anyi \cite{Szlachányi2012} as the skew monoidal category `corepresented' by the lax monoidal comonad $(-)^\star$.
\end{example}

\begin{example}
	This example involves generalised metric spaces, as described by Lawvere \cite{Lawvere1973}.
	Let $\X$ be $[0, \infty]$, the category whose objects are the non-negative real numbers and positive infinity, with a unique morphism $x \rightarrow y$ if and only if $x \geq y$, considered as a cocartesian monoidal category.
	Let $\C$ be the closed symmetric monoidal category of generalised metric spaces.
	Then there is a weak action of $\X$ on $\C$, given by truncation, in which the underlying set of $M^x$ is the same as the underlying set of $M$, but with a new truncated metric, defined as follows.
	\[ M^x (m, m') = \min (M (m, m'), x) \]
	The resulting semidirect product $\X \ltimes \C$ is a skew monoidal structure on $[0, \infty] \times \C$, with tensor product defined as follows.
	\[ \langle x, M \rangle \tensor \langle y, N \rangle = \langle \min (x, y), M^y \tensor N \rangle \]
	The metric on the generalised metric space $M^y \tensor N$ is given as follows.
	\[ (M^y \tensor N) ((m, n), (m', n')) = \min (M (m, m'), y) + N (n, n') \]
\end{example}

\section{Monoidal Categories} \label{section:monoidal}

In this section, we will discuss the possibility of a semidirect product skew monoidal category being a monoidal category.

\begin{definition}[strong action]
	Let $\X$ and $\C$ be monoidal categories.
	A `strong action' of $\X$ on $\C$ is a strong monoidal functor $\Gamma$ from $\X$ to $[\C, \C]$,
	the strict monoidal category of strong monoidal endofunctors on $\C$ and monoidal natural transformations between them with tensor product given by functor composition in `diagrammatic order'.
	Equivalently, this is the same as a weak action of $\X$ on $\C$ in which all of the structure maps below are invertible.
	\[ \varphi^X_{B, C} \colon B^X \tensor C^X \rightarrow (B \tensor C)^X \qquad \varphi^X \colon \munit_\C \rightarrow \munit_\C^X \]
	\[ \psi_C^{X, Y} \colon C^{X \tensor Y} \rightarrow (C^X)^Y \qquad \psi_C \colon C^{\munit_\X} \rightarrow C \]
\end{definition}

\begin{theorem}
	Let $\X$ and $\C$ be monoidal categories.
	Let there be a strong action of $\X$ on $\C$.
	Then the semidirect product $\X \ltimes \C$ is a monoidal category.
\label{theorem:monoidal} \end{theorem}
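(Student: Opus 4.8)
The plan is to build on the work of the previous section, where it was shown that $\X \ltimes \C$ is a skew monoidal category. Recall that a monoidal category is exactly a skew monoidal category whose associator and unitors are invertible: the skew pentagon is Mac Lane's pentagon, and the monoidal triangle identity is among (or follows, via the usual coherence reductions, from) the skew triangle and unitor identities already verified in Section (\ref{section:semidirect-products}). So the whole task reduces to checking that the associator $\alpha$ and the unitors $\lambda$, $\rho$ of $\X \ltimes \C$ are invertible.

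Here I would use the fact that the underlying category of $\X \ltimes \C$ is the product $\X \times \C$, in which a morphism is invertible if and only if both of its projections are. Thus a component of $\alpha$, $\lambda$ or $\rho$ is invertible provided its images under $\pi_\X$ and under $\pi_\C$ are. The $\pi_\X$ images are, by Definition (\ref{definition:semidirect-product}), literally the associator and unitors of $\X$, hence invertible since $\X$ is monoidal. For the $\pi_\C$ images, I would read off from Definition (\ref{definition:semidirect-product}) that each is a composite whose factors are, up to tensoring with an identity morphism, either a component of the associator or a unitor of $\C$, or else one of the action structure maps $\psi_A^{Y, Z}$, $\varphi_{A^Y, B}^Z$, $\varphi^X$, $\psi_C$. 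The former are invertible because $\C$ is monoidal, and the latter because the action is strong; tensoring an isomorphism with an identity gives an isomorphism, and composites of isomorphisms are isomorphisms, so every $\pi_\C$ image is invertible. Combining the two projections, $\alpha$, $\lambda$ and $\rho$ are invertible, so $\X \ltimes \C$ is a monoidal category.

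I do not anticipate a genuine obstacle: the argument is essentially the observation that every map introduced in the construction of the skew structure is, in the strong case, assembled from invertible pieces. The only points deserving a sentence of care are the opening reduction (that invertibility of the structure maps, together with the skew axioms already proved, is all that is required, so that no new coherence diagram need be checked) and the routine matching of each factor appearing in the $\pi_\C$ images of $\alpha$, $\lambda$, $\rho$ with the corresponding invertible structure map.
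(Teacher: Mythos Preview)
Your proposal is correct and follows essentially the same approach as the paper: reduce to invertibility of the associator and unitors, and observe that these are built entirely from the coherence data of $\X$ and $\C$ together with the action structure maps $\varphi$ and $\psi$, all of which are invertible in the strong case. The paper's proof is just a terser statement of exactly this; your explicit use of the projections $\pi_\X$ and $\pi_\C$ to check invertibility componentwise merely spells out what the paper leaves implicit.
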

\begin{proof}
	In order to show that a skew monoidal category is a monoidal category, it suffices to show that the coherence data is invertible.
	The coherence data in $\X \ltimes \C$ are made up from the coherence data in $\X$, the coherence data in $\C$ and the structure maps of the action.
	Since all of these are invertible, it follows that the coherence data in $\X \ltimes \C$ is invertible, and that $\X \ltimes \C$ is a monoidal category.
\end{proof}

The semidirect products of monoidal categories introduced here are a special case of the distributive laws for pseudomonads introduced by Marmolejo \cite{Marmolejo1999}.
A monoidal category can be regarded as a pseudomonad in the 1-object 3-category obtained as the delooping of the cartesian monoidal 2-category $\Cat$.
A strong action of one monoidal category on another is then precisely a distributive law between these pseudomonads which is partially trivial in a particular sense.

We will now give some examples of semidirect product monoidal categories.

\begin{example}
	Let $\C$ be a closed cartesian category.
	Let $\X = \C^\op$.
	Then there is a strong action of $\X$ on $\C$ using the internal hom, defined as follows.
	\[ C^X = [X, C] \]
	The resulting semidirect product $\X \ltimes \C$ is a monoidal structure on $\C^\op \times \C$, with tensor product defined as follows.
	\[ \langle X, B \rangle \tensor \langle Y, C \rangle = \langle X \times Y, [Y, B] \times C \rangle \]
\end{example}

\begin{example}
	Let $\C$ be a closed cartesian category with finite coproducts.
	Let $\X$ be $\C$, considered as a cocartesian monoidal category.
	Choose an object $J$ in $\C$.
	Then there is a strong action of $\X$ on $\C$ using the internal hom, defined as follows.
	\[ C^X = [[X, J], C] \]
	The resulting semidirect product $\X \ltimes \C$ is a monoidal structure on $\C \times \C$, with tensor product defined as follows.
	\[ \langle X, B \rangle \tensor \langle Y, C \rangle = \langle X + Y, [[Y, J], B] \times C \rangle \]
\end{example}

\begin{example}
	Choose a category, $\J$, and a monoidal category, $\M$.
	Let $\X$ be $[\J, \J]$, the strict monoidal category of endofunctors of $\J$ and natural transformations between them with tensor product given by functor composition in `functional order'.
	Let $\C$ be $[\J, \M]$; this category inherits a monoidal structure from that of $\M$.
	Then there is a strong action of $\X$ on $\C$ given by precomposition, defined as follows.
	\[ C^X = C \circ X \]
	The resulting semidirect product $\X \ltimes \C$ is a monoidal structure on $[\J, \J] \times [\J, \M]$, with tensor product defined as follows.
	\[ \langle F, B \rangle \tensor \langle G, C \rangle = \langle F \circ G, (B \circ G) \tensor C \rangle \]
\end{example}

\begin{example}
	As a specific case of the previous example, let $\J$ be a group, $G$, considered as a 1-object groupoid, and let $\M$ be $\Vect$, the category of vector spaces and linear maps.
	Then $\X$ is a category whose objects are endomorphisms of $G$ and $\C$ is $\Rep (G)$, the category of representations of $G$.
	The resulting semidirect product $\X \ltimes \C$ is a monoidal structure on $[G, G] \times \Rep (G)$, with tensor product defined as follows.
	\[ \langle f, U \rangle \tensor \langle g, V \rangle = \langle f \circ g, g^\star (U) \tensor V \rangle \]
\end{example}

\begin{example}
	This example involves generalised metric spaces, as described by Lawvere \cite{Lawvere1973}.
	Let $\X$ be the strict closed symmetric monoidal category $\{F \rightarrow T\}$ of truth values, with tensor product given by logical conjuction and internal hom given by logical implication.
	Let $\C$ be the symmetric monoidal category of generalised metric spaces.
	Then there is a strong action of $\X$ on $\C$, in which the underlying set of $M^x$ is the same as the underlying set of $M$, but with a new metric, defined as follows.
	\begin{align*}
		M^T (m, m') &= M (m, m') \\
		M^F (m, m') &= \begin{cases} 0 & \text{if}~ M (m, m') = 0 \\ \infty & \text{if}~ M (m, m') > 0 \end{cases}
	\end{align*}
	The resulting semidirect product $\X \ltimes \C$ is a monoidal structure on $\X \times \C$, with tensor product defined as follows.
	\[ \langle x, M \rangle \tensor \langle y, N \rangle = \langle x \tensor y, M^y \tensor N \rangle \]
	The metric on the generalised metric space $M^y \tensor N$ with underlying set $M \times N$ is given as follows.
	\begin{align*}
		(M^T \tensor N) ((m, n), (m', n')) &= M (m, m') + N (n, n') \\
		(M^F \tensor N) ((m, n), (m', n')) &= \begin{cases} N (n, n') & \text{if}~ M (m, m') = 0 \\ \infty & \text{if}~ M (m, m') > 0 \end{cases}
	\end{align*}
\label{example:counterexample-closed} \end{example}

\section{Autonomous Monoidal Categories} \label{section:autonomous}

In this section, we will discuss the possibility of a semidirect product monoidal category being left autonomous.
We will give a sufficient condition for an object in a semidirect product monoidal category to have a left dual,
and therefore a sufficient condition for a semidirect product monoidal category to be left autonomous.
Analogous statements hold for right duals and right autonomous monoidal categories.

Throughout this section, we will need to be familiar with the notion of duality in monoidal categories; for an overview, see Joyal--Street \cite{JoyalStreet1993}, \S 7.
We will denote the left dual of an object $A$ by $\duall{A}$ and the evaluation and coevaluation morphisms as follows.
\[ \varepsilon^A \colon \duall{A} \tensor A \rightarrow \munit \qquad \eta^A \colon \munit \rightarrow A \tensor \duall{A} \]

Let $\X$ and $\C$ be monoidal categories.
Let there be a strong action of $\X$ on $\C$.
As before, there is a semidirect product monoidal category, $\X \ltimes \C$, with tensor product defined as follows.
\[ \langle X, B \rangle \tensor \langle Y, C \rangle = \langle X \tensor Y, B^Y \tensor C \rangle \]

Strong monoidal functors preserve duals.
In particular, the strong monoidal functor $\Gamma \colon \X \rightarrow [\C, \C]$ preserves duals; this means that, for each object $X$ in $\X$ with a left dual $\duall{X}$, we have an adjunction $(-)^X \dashv (-)^\duall{X}$.
See \cite{JoyalStreet1993} for details.

We will now give a sufficient condition for an object in a semidirect product monoidal category to have a left dual,
\begin{lemma}
	If $X$ and $A$ have left duals $\duall{X}$ and $\duall{A}$, then $\langle X, A \rangle$ has a left dual, defined as follows.
	\[ \duall{\langle X, A \rangle} = \langle \duall{X}, (\duall{A})^\duall{X} \rangle \]
\end{lemma}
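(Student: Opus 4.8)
The plan is to exhibit explicit evaluation and coevaluation morphisms for $\langle X, A \rangle$ and then verify the two triangle (zig-zag) identities. Since a morphism in $\X \ltimes \C$ is determined by its images under $\pi_\X$ and $\pi_\C$, and since on the $\pi_\X$-component everything reduces to the known duality $\duall{X} \tensor X \to \munit_\X$, $\munit_\X \to X \tensor \duall{X}$ in $\X$, the real work is entirely on the $\pi_\C$-component. I would first unpack what $\langle \duall{X}, (\duall{A})^\duall{X} \rangle \tensor \langle X, A \rangle$ and $\langle X, A \rangle \tensor \langle \duall{X}, (\duall{A})^\duall{X} \rangle$ are using the tensor product formula:
\[ \langle \duall{X}, (\duall{A})^\duall{X} \rangle \tensor \langle X, A \rangle = \langle \duall{X} \tensor X, ((\duall{A})^\duall{X})^X \tensor A \rangle \]
\[ \langle X, A \rangle \tensor \langle \duall{X}, (\duall{A})^\duall{X} \rangle = \langle X \tensor \duall{X}, A^\duall{X} \tensor (\duall{A})^\duall{X} \rangle. \]

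For the evaluation $\varepsilon^{\langle X, A \rangle} \colon \duall{\langle X, A \rangle} \tensor \langle X, A \rangle \to \munit_{\X \ltimes \C} = \langle \munit_\X, \munit_\C \rangle$, the $\pi_\X$-component is $\varepsilon^X$; the $\pi_\C$-component must be a map $((\duall{A})^\duall{X})^X \tensor A \to \munit_\C$. Here I would use $\psi^{-1}$ (invertible, since the action is strong) to rewrite $((\duall{A})^\duall{X})^X \iso (\duall{A})^{\duall{X} \tensor X}$, then apply $(\duall{A})^{\varepsilon^X} \colon (\duall{A})^{\duall{X} \tensor X} \to (\duall{A})^{\munit_\X}$, then $\psi_{\duall{A}} \colon (\duall{A})^{\munit_\X} \to \duall{A}$, giving $\duall{A} \tensor A \to \munit_\C$ via $\varepsilon^A$. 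For the coevaluation $\eta^{\langle X, A \rangle} \colon \langle \munit_\X, \munit_\C \rangle \to \langle X, A \rangle \tensor \duall{\langle X, A \rangle}$, the $\pi_\X$-component is $\eta^X$; the $\pi_\C$-component is a map $\munit_\C \to A^\duall{X} \tensor (\duall{A})^\duall{X}$, which I would build as $\munit_\C \xrightarrow{\eta^A} A \tensor \duall{A} \xrightarrow{\varphi^\duall{X}} (A \tensor \duall{A})^\duall{X} \xleftarrow{} A^\duall{X} \tensor (\duall{A})^\duall{X}$ — i.e. using $(\varphi^\duall{X}_{A, \duall{A}})^{-1}$, again available because the action is strong. (Alternatively one can observe that $(-)^\duall{X}$ is right adjoint to $(-)^X$, hence lax monoidal with $\varphi$ invertible, so it is strong monoidal and preserves the dual pair $(A, \duall{A})$ — this is the conceptual reason the construction works.)

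The main obstacle will be verifying the two zig-zag identities on the $\pi_\C$-component, since these involve composites of the associator $\alpha$ of $\X \ltimes \C$ (which on $\pi_\C$ is the three-step composite from Definition~\ref{definition:semidirect-product} involving $\psi$, $\alpha_\C$ and $\varphi$), the unitors $\lambda$, $\rho$ (which on $\pi_\C$ involve $\varphi$ and $\psi$), together with the $\varepsilon$, $\eta$ defined above. I would reduce each zig-zag to a diagram in $\C$ and chase it using: the naturality of $\psi$ and $\varphi$ (the monoidal-natural-transformation axioms listed in Section~\ref{section:actions}), the hexagon/coherence axioms for $\psi$ and $\varphi$ themselves (the pentagon-like diagrams in Section~\ref{section:actions}), the zig-zag identities for $(X, \duall{X})$ in $\X$ pushed through $(-)^{(-)}$ (using the naturality square for $\psi$ in the $\X$-variable applied to $\varepsilon^X$ and $\eta^X$), and finally the zig-zag identities for $(A, \duall{A})$ in $\C$. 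I expect that, after inserting $\psi$'s and $\varphi$'s to move all the exponents into a canonical position, both composites collapse to an identity by exactly the same bookkeeping that makes Lemma~\ref{lemma:monoid-associative} work at the level of elements; the ``compare with the proof of Lemma'' remarks suggest the intended proof is this kind of diagram chase rather than anything requiring a new idea. A clean write-up would present each zig-zag as a pasting of labelled commuting cells, citing the relevant axiom from Section~\ref{section:actions} for each cell.
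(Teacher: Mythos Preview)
Your approach is correct but takes a genuinely different route from the paper. The paper does not construct $\varepsilon$ and $\eta$ explicitly nor chase the zig-zag diagrams; instead it invokes the adjunction characterisation of duals and establishes, in a single chain of natural bijections,
\[
(\X \ltimes \C)\bigl(\langle X, A \rangle \tensor \langle Y, B \rangle,\ \langle Z, C \rangle\bigr)
\ \iso\
(\X \ltimes \C)\bigl(\langle X, A \rangle,\ \langle Z, C \rangle \tensor \langle \duall Y, (\duall B)^{\duall Y} \rangle\bigr),
\]
using successively the duality $\duall Y$ in $\X$, the duality $\duall B$ in $\C$, the adjunction $(-)^Y \dashv (-)^{\duall Y}$ (the image of the dual pair $(Y,\duall Y)$ under $\Gamma$, exactly as you note parenthetically), and finally the invertibility of $\varphi^{\duall Y}$ to split $(C \tensor \duall B)^{\duall Y}$. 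This is a few lines long and sidesteps all the coherence bookkeeping you anticipate. Your explicit construction buys concrete formulas for the (co)evaluation, which is what one would want for the worked example that follows, at the cost of the diagram chase. One small slip in your outline: the $\pi_\C$-component of your coevaluation invokes a map $A \tensor \duall A \to (A \tensor \duall A)^{\duall X}$ labelled ``$\varphi^{\duall X}$'', but no such structure map exists; the intended composite should be $\munit_\C \xrightarrow{\varphi^{\duall X}} \munit_\C^{\duall X} \xrightarrow{(\eta^A)^{\duall X}} (A \tensor \duall A)^{\duall X} \xrightarrow{(\varphi^{\duall X}_{A,\duall A})^{-1}} A^{\duall X} \tensor (\duall A)^{\duall X}$.
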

\begin{proof}
	We have the following natural isomorphism of hom sets.
	\begin{align*}
		(\X \ltimes \C) (\langle X, A \rangle \tensor \langle Y, B \rangle, \langle Z, C \rangle)
		&= (\X \ltimes \C) (\langle X \tensor Y, A^Y \tensor B \rangle, \langle Z, C \rangle) \\
		&= \X (X \tensor Y, Z) \times \C (A^Y \tensor B, C) \\
		&\iso \X (X, Z \tensor \duall{Y}) \times \C (A^Y, C \tensor \duall{B}) \\
		&\iso \X (X, Z \tensor \duall{Y}) \times \C (A, (C \tensor \duall{B})^\duall{Y}) \\
		&\iso \X (X, Z \tensor \duall{Y}) \times \C (A, C^\duall{Y} \tensor (\duall{B})^\duall{Y}) \\
		&= (\X \ltimes \C) (\langle X, A \rangle, \langle Z \tensor \duall{Y}, C^\duall{Y} \tensor (\duall{B})^\duall{Y} \rangle) \\
		&= (\X \ltimes \C) (\langle X, A \rangle, \langle Z, C \rangle \tensor \langle \duall{Y}, (\duall{B})^\duall{Y} \rangle) \\
		&= (\X \ltimes \C) (\langle X, A \rangle, \langle Z, C \rangle \tensor \duall{\langle Y, B \rangle})
	\end{align*}
\end{proof}

This leads to the following sufficient condition for a semidirect product monoidal category to be left autonomous.
\begin{theorem}
	Let $\X$ and $\C$ be left autonomous monoidal categories.
	Let there be a strong action of $\X$ on $\C$.
	Then the semidirect product $\X \ltimes \C$ is a left autonomous monoidal category.
\label{theorem:left-autonomous} \end{theorem}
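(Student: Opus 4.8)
The plan is to derive this immediately from the preceding lemma. Recall that a monoidal category is left autonomous precisely when every one of its objects has a left dual. So it suffices to exhibit a left dual for an arbitrary object of $\X \ltimes \C$, and the construction has already been done.

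Every object of $\X \ltimes \C$ has the form $\langle X, A \rangle$ for some $X \in \X$ and $A \in \C$. Since $\X$ is left autonomous, $X$ has a left dual $\duall{X}$; since $\C$ is left autonomous, $A$ has a left dual $\duall{A}$. The side hypotheses of the lemma are therefore met for $\langle X, A \rangle$, and the lemma produces a left dual $\langle \duall{X}, (\duall{A})^{\duall{X}} \rangle$ for it. As $\langle X, A \rangle$ was arbitrary, every object of $\X \ltimes \C$ has a left dual, whence $\X \ltimes \C$ is left autonomous.

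The only point that deserves a second look is whether the lemma really produces a \emph{dual} and not merely a hom-set bijection: we need the isomorphism $(\X \ltimes \C)(\langle X, A\rangle \tensor \langle Y, B \rangle, \langle Z, C\rangle) \iso (\X \ltimes \C)(\langle X, A\rangle, \langle Z, C\rangle \tensor \duall{\langle Y, B\rangle})$ to be natural in $\langle X, A \rangle$ and in $\langle Z, C \rangle$, since it is this naturality — via Yoneda and the standard characterisation of dual objects in Joyal--Street — that lets one read off genuine evaluation and coevaluation morphisms. This naturality is automatic, because each isomorphism composed in the lemma is natural: the two middle ones come from the parametrised adjunctions $(-) \tensor \duall{Y} \dashv (-) \tensor Y$ in $\X$ and $(-) \tensor \duall{B} \dashv (-) \tensor B$ in $\C$, together with the adjunction $(-)^Y \dashv (-)^{\duall{Y}}$ supplied by the strong action; the remaining ones are either instances of the strong-action structure isomorphism $\varphi^{\duall{Y}}$ or bare rewritings of the definitions of $\tensor$ and of hom-sets in a product category. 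Consequently there is no genuine obstacle here; the substance of the argument lies entirely in the lemma, and this theorem is essentially its corollary — one could equally phrase it as: left autonomy of $\X$ and $\C$ guarantees the hypotheses of the lemma at every object, which upgrades the pointwise statement to the global one.
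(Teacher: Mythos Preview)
Your proposal is correct and matches the paper's approach: the paper states the theorem immediately after the lemma without a separate proof, treating it as the obvious global consequence of the pointwise statement, which is exactly what you do. Your additional remark on naturality is a welcome sanity check that the paper leaves implicit.
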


We will now give an example of a left autonomous semidirect product monoidal category.

\begin{example}
	Choose a field $\mathbb{K}$ and an integer $k$.
	Let $\X$ be $\mathbb{K}^\star$, the multiplicative group of non-zero elements of $\mathbb{K}$, interpreted as a discrete symmetric monoidal category.
	Let $\C$ be the category of finite dimensional vector spaces over $\mathbb{K}$.
	Then there is an action of $\X$ on $\C$, in which each strong monoidal endofunctor $(-)^x$ is the identity functor and all the detail is contained in the structure morphisms, defined as follows.
	\[ \varphi^x_{B, C} = x^k \cdot \id_{B \tensor C} \qquad \varphi^x = x^{-k} \cdot \id_{\munit_\C} \]
	\[ \psi_C^{x, y} = \id_C \qquad \psi_C = \id_C \]
	The resulting semidirect product $\X \ltimes \C$ is a left autonomous monoidal structure on $\mathbb{K}^\star \times \Vect_\mathbb{K}$, with tensor product and left duals defined as follows.
	\[ \langle x, B \rangle \tensor \langle y, C \rangle = \langle x \cdot y, B \tensor C \rangle \]
	\[ \duall{\langle x, C \rangle} = \langle x^\inv, \duall{C} \rangle \]
	These agree with the tensor product and left duals in $\mathbb{K}^\star \times \Vect_\mathbb{K}$; however, some of the associator, unitors, evaluation and coevaluation morphisms have been deformed, as follows.
	\[ \pi_\C (\alpha_{\langle x, A \rangle, \langle y, B \rangle, \langle z, C \rangle}) = z^k \cdot \alpha_{A, B, C} \]
	\[ \pi_\C (\lambda_{\langle x, C \rangle}) = x^{-k} \cdot \lambda_C \qquad \pi_\C (\rho_{\langle x, C \rangle}) = \rho_C \]
	\[ \pi_\C (\varepsilon^{\langle x, C \rangle}) = x^{-2k} \cdot \varepsilon^C \qquad \pi_\C (\eta^{\langle x, C \rangle}) = \eta^C \]
\end{example}

\section{Right Closed Monoidal Categories} \label{section:right-closed}

In this section, we will discuss the possibility of a semidirect product monoidal category being right closed.
We will show that the analogue of Theorem (\ref{theorem:left-autonomous}) for right closed monoidal categories is not true.
We will then give a sufficient condition for a semidirect product monoidal category to be right closed.

Throughout this section, we will use the term `right closed monoidal category' to mean a monoidal category $\C$ in which tensoring on the right has a right adjoint.
We will denote the internal hom as follows.
\[ [-, -] \colon \C^\op \times \C \rightarrow \C \]
Using this notation, the hom-tensor adjunction is the following natural isomorphism of hom sets.
\[ \C (A \tensor B, C) \iso \C (A, [B, C]) \]

We might hope for the following analogue of Theorem (\ref{theorem:left-autonomous}) for right closed monoidal categories to hold.
\begin{conjecture}
	Let $\X$ and $\C$ be right closed monoidal categories.
	Let there be a strong action of $\X$ on $\C$.
	Then the semidirect product $\X \ltimes \C$ is a right closed monoidal category.
\label{conjecture:right-closed} \end{conjecture}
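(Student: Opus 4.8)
The natural plan is to imitate the hom-set calculation used in Section~\ref{section:autonomous} to produce left duals. Given objects $\langle Y, B\rangle$ and $\langle Z, C\rangle$ of $\X \ltimes \C$, I would try to construct an object $[\langle Y, B\rangle, \langle Z, C\rangle]$ for which the following chain of natural isomorphisms of hom-sets continues one more step; the displayed steps use the definition of the tensor product, the fact that the underlying category of $\X \ltimes \C$ is $\X \times \C$, and that $\X$ and $\C$ are right closed:
\begin{align*}
	(\X \ltimes \C)(\langle X, A\rangle \tensor \langle Y, B\rangle, \langle Z, C\rangle)
	&= (\X \ltimes \C)(\langle X \tensor Y, A^Y \tensor B\rangle, \langle Z, C\rangle) \\
	&= \X(X \tensor Y, Z) \times \C(A^Y \tensor B, C) \\
	&\iso \X(X, [Y, Z]) \times \C(A^Y, [B, C]).
\end{align*}
The $\X$-factor is already of the form $\X(X, -)$, so this would be $(\X \ltimes \C)(\langle X, A\rangle, \langle [Y, Z], Q\rangle)$ as soon as $\C(A^Y, [B, C])$ could be rewritten as $\C(A, Q)$ naturally in $A$.

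That last rewriting is the main obstacle, and I expect it to be fatal rather than merely awkward: it asks for a right adjoint $R_Y$ to the endofunctor $(-)^Y \colon \C \to \C$, giving $\C(A^Y, D) \iso \C(A, R_Y D)$ and hence $[\langle Y, B\rangle, \langle Z, C\rangle] = \langle [Y, Z], R_Y [B, C]\rangle$. In the autonomous setting this was free: a strong monoidal functor preserves duals, so $(-)^Y \dashv (-)^{\duall{Y}}$ whenever $Y$ is dualizable. But a strong monoidal endofunctor of a merely right closed monoidal category need not have a right adjoint at all. A concrete instance is $\X = \C = (\Set, \times)$ with the strong action $\Gamma(Y) = [Y, -]$, that is $A^Y = [Y, A]$: then $(-)^Y$ fails to preserve coproducts as soon as $Y$ has more than one element, so it has no right adjoint.

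In fact the same bookkeeping shows that the situation is tight. On the underlying category $\X \times \C$, the functor $- \tensor \langle Y, B\rangle$ is simply the product of $- \tensor Y \colon \X \to \X$, which has the right adjoint $[Y, -]$, with the functor $A \mapsto A^Y \tensor B$ on $\C$. Taking $B = \munit_\C$, where $A^Y \tensor \munit_\C \iso A^Y$ naturally, shows that if $\X \ltimes \C$ is right closed then every $(-)^Y$ has a right adjoint; conversely, if each $(-)^Y$ has a right adjoint $R_Y$ then $A \mapsto A^Y \tensor B$ has the right adjoint $R_Y \circ [B, -]$, so $- \tensor \langle Y, B\rangle$ has a right adjoint for every $\langle Y, B\rangle$. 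Thus $\X \ltimes \C$ is right closed if and only if every $(-)^Y \colon \C \to \C$ admits a right adjoint; the conjecture is \emph{false} as stated, and the way I would present matters is to exhibit such a counterexample first.

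The positive content I would then record is the resulting sufficient condition: if every $(-)^Y \colon \C \to \C$ admits a right adjoint $R_Y$, then $\X \ltimes \C$ is right closed, with internal hom
\[ [\langle Y, B\rangle, \langle Z, C\rangle] = \langle [Y, Z], R_Y [B, C]\rangle. \]
What is left is bookkeeping rather than new ideas: checking that this assignment is functorial and natural in all three variables --- in particular that the chosen adjoints $R_Y$ assemble coherently (contravariantly) in $Y$ --- which follows formally from the parametrised adjoint functor theorem once every $- \tensor \langle Y, B\rangle$ is known to have a right adjoint, and the hom-set chain above supplies exactly that. So past the calculation sketched, the remaining work is the routine naturality check together with the explicit counterexample that kills the conjecture in its original form.
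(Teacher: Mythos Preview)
Your overall strategy matches the paper's: show the conjecture is false by counterexample, then state the positive result under the extra hypothesis that each $(-)^Y$ admits a right adjoint, with internal hom $[\langle Y,B\rangle,\langle Z,C\rangle]=\langle [Y,Z],\,R_Y[B,C]\rangle$. Your ``if and only if'' observation (via $B=\munit_\C$) is a clean sharpening that the paper does not make explicit.

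However, your proposed counterexample does not work. With $\X=\C=(\Set,\times)$ and $\Gamma(Y)=[Y,-]$, the assignment $Y\mapsto[Y,-]$ is \emph{contravariant} in $Y$, so $\Gamma$ is not a functor $\X\to[\C,\C]_\text{lax}$ at all; there is no weak (let alone strong) action here. Passing to $\X=\Set^\op$ repairs the variance but breaks the hypothesis: $(\Set^\op,\times)$ is not right closed, since $(-\times Y)\colon\Set\to\Set$ has no \emph{left} adjoint. So this example does not witness the failure of the conjecture.

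The paper's counterexample is Example~\ref{example:counterexample-closed}: $\X=\{F\to T\}$ with conjunction (closed symmetric monoidal, hence right closed), $\C$ the closed symmetric monoidal category of generalised metric spaces, and the strong action that leaves $M^T=M$ and collapses $M^F$ to the $\{0,\infty\}$-valued metric. One then checks that $-\tensor\langle F,1\rangle$ fails to preserve a sequential colimit of shrinking two-point spaces, so $\X\ltimes\C$ is not right closed. Your argument would be complete once you replace the $\Set$ example with one along these lines in which $\Gamma$ is genuinely covariant and both $\X$ and $\C$ are right closed.
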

However, this conjecture is false, as we will now show.
Consider Example (\ref{example:counterexample-closed}); we will show that tensoring on the right by $\langle F, 1 \rangle$ does not preserve colimits.
Denote by $D_t$ the generalised metric space consisting of two points separated by a distance $t$ in each direction.
There is an obvious morphism $D_s \rightarrow D_t$ whenever $s \geq t$.
Consider the following diagram.
\[ \langle T, D_{\frac{1}{1}} \rangle \rightarrow \langle T, D_{\frac{1}{2}} \rangle \rightarrow \langle T, D_{\frac{1}{3}} \rangle \rightarrow \langle T, D_{\frac{1}{4}} \rangle \rightarrow \dotsb \]
The colimit of this diagram is the object $\langle T, D_0 \rangle$.
The image of this diagram under the functor $(- \tensor \langle F, 1 \rangle)$ is the following diagram.
\[ \langle F, D_\infty \rangle \rightarrow \langle F, D_\infty \rangle \rightarrow \langle F, D_\infty \rangle \rightarrow \langle F, D_\infty \rangle \rightarrow \dotsb \]
The colimit of this diagram is the object $\langle F, D_\infty \rangle$.
Thus, if tensoring on the right by $\langle F, 1 \rangle$ were to preserve colimits, we would expect an isomorphism of the following form.
\[ \langle T, D_0 \rangle \tensor \langle F, 1 \rangle \iso \langle F, D_\infty \rangle \]
However, the left hand side of this equation evaluates as follows.
\[ \langle T, D_0 \rangle \tensor \langle F, 1 \rangle = \langle F, D_0 \rangle \]
Thus, no such isomorphism exists, so tensoring on the right by $\langle F, 1 \rangle$ cannot preserve colimits, and so $\X \ltimes \C$ cannot be right closed.
This provides a counterexample to Conjecture (\ref{conjecture:right-closed}).

However, the following weaker result does hold.
\begin{theorem}
	Let $\X$ and $\C$ be right closed monoidal categories.
	Let there be a strong action of $\X$ on $\C$ such that each strong monoidal endofunctor $(-)^X$ has a right adjoint, denoted $(-)_X$.
	Then the semidirect product $\X \ltimes \C$ is a right closed monoidal category, with internal hom defined as follows.
	\[ [\langle Y, B \rangle, \langle Z, C \rangle] = \langle [Y, Z], [B, C]_Y \rangle \]
\label{theorem:right-closed} \end{theorem}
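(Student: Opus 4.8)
The plan is to imitate the left-dual lemma of Section~\ref{section:autonomous}: for each fixed object $\langle Y, B \rangle$ of $\X \ltimes \C$, I exhibit a natural isomorphism of hom-sets
\[
(\X \ltimes \C)(\langle X, A \rangle \tensor \langle Y, B \rangle, \langle Z, C \rangle) \iso (\X \ltimes \C)(\langle X, A \rangle, [\langle Y, B \rangle, \langle Z, C \rangle]),
\]
natural in $\langle X, A \rangle$ and in $\langle Z, C \rangle$. That is exactly what it means for $-\tensor\langle Y, B\rangle$ to admit the right adjoint $[\langle Y, B\rangle, -]$, and hence for $\X \ltimes \C$ to be right closed.

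The isomorphism is a chain of three elementary adjunctions. Since the underlying category of $\X \ltimes \C$ is the product $\X \times \C$, hom-sets there split as products of hom-sets; unfolding the semidirect-product tensor gives
\begin{align*}
	(\X \ltimes \C)(\langle X, A \rangle \tensor \langle Y, B \rangle, \langle Z, C \rangle)
	&= \X(X \tensor Y, Z) \times \C(A^Y \tensor B, C) \\
	&\iso \X(X, [Y, Z]) \times \C(A^Y, [B, C]) \\
	&\iso \X(X, [Y, Z]) \times \C(A, [B, C]_Y) \\
	&= (\X \ltimes \C)(\langle X, A \rangle, \langle [Y, Z], [B, C]_Y \rangle),
\end{align*}
where the first isomorphism applies the hom-tensor adjunction in $\X$ and in $\C$, and the second applies the adjunction $(-)^Y \dashv (-)_Y$ provided by hypothesis. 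By the stated formula, the final line is $(\X \ltimes \C)(\langle X, A \rangle, [\langle Y, B \rangle, \langle Z, C \rangle])$, so we are done.

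Each isomorphism in the chain is natural in the variables that occur in it, so the composite is natural in $\langle X, A \rangle$ and $\langle Z, C \rangle$, establishing the adjunction $(-\tensor\langle Y, B\rangle) \dashv [\langle Y, B\rangle, -]$ for every $\langle Y, B\rangle$. Functoriality of $[-,-]$ in all three arguments — in particular contravariance in $\langle Y, B\rangle$, for which the chosen right adjoints $(-)_Y$ must be organised into a coherent family — then follows automatically from the uniqueness of adjoints (the theory of parametrised adjunctions). The only point needing genuine care is the naturality bookkeeping: one must check that the hom-tensor adjunction of $\X$ and the adjunction $(-)^Y \dashv (-)_Y$ are invoked with compatible variances, so that the composite is simultaneously natural in $\langle X, A \rangle$ and $\langle Z, C \rangle$; since each factor is a standard adjunction this is routine, but it is the one place an error could enter. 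Note that no monoidal structure on the functors $(-)_Y$ is required — only their underlying hom-set bijections.
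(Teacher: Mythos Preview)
Your proof is correct and follows essentially the same route as the paper: the same chain of hom-set isomorphisms using the right-closed structures of $\X$ and $\C$ and the adjunction $(-)^Y \dashv (-)_Y$. Your additional remarks on naturality and parametrised adjoints are more explicit than the paper's version, but the core argument is identical.
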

\begin{proof}
	We have the following natural isomorphism of hom sets.
	\begin{align*}
		(\X \ltimes \C) (\langle X, A \rangle \tensor \langle Y, B \rangle, \langle Z, C \rangle)
		&= (\X \ltimes \C) (\langle X \tensor Y, A^Y \tensor B \rangle, \langle Z, C \rangle) \\
		&= \X (X \tensor Y, Z) \times \C (A^Y \tensor B, C) \\
		&\iso \X (X, [Y, Z]) \times \C (A^Y, [B, C]) \\
		&\iso \X (X, [Y, Z]) \times \C (A, [B, C]_Y) \\
		&= (\X \ltimes \C) (\langle X, A \rangle, \langle [Y, Z], [B, C]_Y \rangle) \\
		&= (\X \ltimes \C) (\langle X, A \rangle, [\langle Y, B \rangle, \langle Z, C \rangle])
	\end{align*}
\end{proof}
\begin{corollary}
	Let $\X$ be a left autonomous monoidal category.
	Let $\C$ be a right closed monoidal category.
	Let there be a strong action of $\X$ on $\C$.
	Then the semidirect product $\X \ltimes \C$ is a right closed monoidal category, with internal hom defined as follows.
	\[ [\langle Y, B \rangle, \langle Z, C \rangle] = \langle Z \tensor \duall{Y}, [B, C]^\duall{Y} \rangle \]
\label{corollary:right-closed} \end{corollary}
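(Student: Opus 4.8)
The plan is to deduce this from Theorem~\ref{theorem:right-closed} by checking that its two hypotheses are met here: that $\X$ is right closed, and that each endofunctor $(-)^Y$ has a right adjoint.

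First I would observe that a left autonomous monoidal category is automatically right closed. Indeed, for an object $Y$ with left dual $\duall{Y}$, the standard triangle-identity argument (using $\varepsilon^Y$ and $\eta^Y$) shows that $(- \tensor Y)$ is left adjoint to $(- \tensor \duall{Y})$, so $\X$ is right closed with internal hom $[Y, Z] = Z \tensor \duall{Y}$; concretely one has a natural isomorphism $\X(X \tensor Y, Z) \iso \X(X, Z \tensor \duall{Y})$.

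Second, since $\Gamma \colon \X \rightarrow [\C, \C]$ is a strong monoidal functor and strong monoidal functors preserve duals, the endofunctor $(-)^Y$ acquires a right adjoint, namely $(-)^{\duall{Y}}$; that is, $(-)^Y \dashv (-)^{\duall{Y}}$, exactly as recalled in the discussion at the start of Section~\ref{section:autonomous}. In the notation of Theorem~\ref{theorem:right-closed} this simply says that one may take $(-)_Y = (-)^{\duall{Y}}$.

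With both hypotheses verified, Theorem~\ref{theorem:right-closed} applies directly, and substituting $[Y, Z] = Z \tensor \duall{Y}$ and $[B, C]_Y = [B, C]^{\duall{Y}}$ into its formula for the internal hom gives
\[ [\langle Y, B \rangle, \langle Z, C \rangle] = \langle [Y, Z], [B, C]_Y \rangle = \langle Z \tensor \duall{Y}, [B, C]^{\duall{Y}} \rangle, \]
as claimed. (Alternatively, one could write out the chain of hom-set isomorphisms directly, as in the proof of Theorem~\ref{theorem:right-closed}, replacing $\X(X, [Y,Z])$ by $\X(X, Z \tensor \duall{Y})$ and $\C(A^Y, -)$ by $\C(A, (-)^{\duall{Y}})$.) I do not anticipate a genuine obstacle; the only points needing care are the orientations of the two adjunctions — that "left autonomous" supplies a right adjoint to tensoring \emph{on the right} (rather than on the left), and that preservation of duals by $\Gamma$ yields $(-)^Y \dashv (-)^{\duall{Y}}$ and not the reverse — so that the resulting hom-set isomorphism runs in the direction demanded by the definition of a right closed structure.
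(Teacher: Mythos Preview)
Your proposal is correct and follows essentially the same approach as the paper: the paper's proof simply observes that strong monoidal functors preserve duals, so $(-)^X \dashv (-)^{\duall{X}}$, and then (implicitly) invokes Theorem~\ref{theorem:right-closed}. You have merely been more explicit than the paper in spelling out the first hypothesis, namely that left autonomy of $\X$ yields the right closed structure $[Y,Z] = Z \tensor \duall{Y}$.
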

\begin{proof}
	Strong monoidal functors preserve duals.
	In particular, the strong monoidal functor $\Gamma \colon \X \rightarrow [\C, \C]$ preserves duals.
	This means that, for each object $X$ in $\X$, the strong monoidal endofunctor $(-)^X$ has a right adjoint, $(-)^\duall{X}$.
	See \cite{JoyalStreet1993} for details.
\end{proof}

We will now give an example of a right closed semidirect product monoidal category.

\begin{example}
	This example involves generalised metric spaces, as described by Lawvere \cite{Lawvere1973}.
	Let $\X$ be $[0, \infty)$ the category whose objects are the non-negative real numbers, with a unique morphism $x \rightarrow y$ if and only if $x \geq y$.
	This category has a closed symmetric monoidal structure, with tensor product given by addition and internal hom given by truncated subtraction, defined as follows.
	\[ x \tensor y = x + y \qquad [x, y] = \max (y - x, 0) \]
	Let $\C$ be the closed symmetric monoidal category of generalised metric spaces.
	Then there is a strong action of $\X$ on $\C$, given by scaling, in which the underlying set of $M^x$ is the same as the underlying set of $M$, but with a new scaled metric, defined as follows.
	\[ M^x (m, m') = M (m, m') \cdot e^x \]
	Each functor $(-)^x$ has a right adjoint (in fact, an inverse), $(-)_x$, in which the underlying set of $M_x$ is the same as the underlying set of $M$, but with a new scaled metric, defined as follows.
	\[ M_x (m, m') = M (m, m') \cdot e^{-x} \]
	The resulting semidirect product $\X \ltimes \C$ is a right closed monoidal structure on $[0, \infty) \times \C$, with tensor product and internal hom defined as follows.
	\[ \langle x, M \rangle \tensor \langle y, N \rangle = \langle x + y, M^y \tensor N \rangle \]
	\[ [\langle y, N \rangle, \langle z, P \rangle] = \langle \max (z - y, 0), [N, P]_y \rangle \]
	The metric on the generalised metric space $M^y \tensor N$ with underlying set $M \times N$ is given as follows.
	\[ (M^y \tensor N) ((m, n), (m', n')) = M (m, m') \cdot e^y + N (n, n') \]
	The metric on the generalised metric space $[N, P]_y$ with underlying set $\C (N, P)$ is given as follows.
	\[ [N, P]_y (f, g) = \sup_{n \in N} P (f (n), g (n)) \cdot e^{-y} \]
\end{example}

\section{Left Closed Monoidal Categories} \label{section:left-closed}

In this section, we will discuss the possibility of a semidirect product monoidal category being left closed.
We will show that the analogue of Theorem (\ref{theorem:left-autonomous}) for left closed monoidal categories is not true.
We will then produce some examples of semidirect product monoidal categories which are left closed but not right closed.

Throughout this section, we will use the term `left closed monoidal category' to mean a monoidal category $\C$ in which tensoring on the left has a right adjoint.
We will denote the internal hom as follows.
\[ [-, -] \colon \C^\op \times \C \rightarrow \C \]
Using this notation, the hom-tensor adjunction is the following natural isomorphism of hom sets.
\[ \C (A \tensor B, C) \iso \C (B, [A, C]) \]

We might hope for the following analogue of Theorem (\ref{theorem:left-autonomous}) for left closed monoidal categories to hold.
\begin{conjecture}
	Let $\X$ and $\C$ be left closed monoidal categories.
	Let there be a strong action of $\X$ on $\C$.
	Then the semidirect product $\X \ltimes \C$ is a left closed monoidal category.
\label{conjecture:left-closed} \end{conjecture}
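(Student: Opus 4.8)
The plan is to \emph{disprove} Conjecture (\ref{conjecture:left-closed}); as the preceding paragraph promises, the analogue for left closed monoidal categories fails, and the obvious place to look for a counterexample is again Example (\ref{example:counterexample-closed}). There $\X = \{F \rightarrow T\}$ is closed symmetric monoidal (conjunction, with internal hom given by implication) and $\C$ is the closed symmetric monoidal category of generalised metric spaces, so both are left closed and the hypotheses of the conjecture are satisfied. As in the refutation of Conjecture (\ref{conjecture:right-closed}), I would show that some tensoring functor fails to preserve a colimit and therefore cannot have a right adjoint --- only now it is tensoring on the \emph{left} that must be a left adjoint, so I would examine $\langle X, B \rangle \tensor (-)$.

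Concretely, I would take $X = \munit_\X = T$ and $B = D_1$ (any generalised metric space with a finite nonzero distance would do). Since $T$ is the monoidal unit of $\X$, we have $\langle T, D_1 \rangle \tensor \langle Y, C \rangle = \langle Y, D_1^Y \tensor C \rangle$, so this functor is the identity on first coordinates but feeds $\langle F, - \rangle$ through $D_1^F = D_\infty$ and $\langle T, - \rangle$ through $D_1^T = D_1$. I would then test it on the coproduct $\langle F, 1 \rangle + \munit_{\X \ltimes \C} = \langle F, 1 \rangle + \langle T, 1 \rangle$. Because the underlying category of $\X \ltimes \C$ is $\X \times \C$, colimits are computed coordinatewise, so this coproduct is $\langle F + T, 1 + 1 \rangle = \langle T, D_\infty \rangle$ (using $F + T = T$ in $\X$, and that the coproduct of two one-point generalised metric spaces separates them by $\infty$).

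It remains to compare two generalised metric spaces. The image of the coproduct is $\langle T, D_1 \rangle \tensor \langle T, D_\infty \rangle = \langle T, D_1 \tensor D_\infty \rangle = \langle T, D_1 + D_1 \rangle$, since the additive tensor of $D_1$ with $D_\infty$ consists of two copies of $D_1$ at infinite distance from one another. The coproduct of the images is $(\langle T, D_1 \rangle \tensor \langle F, 1 \rangle) + (\langle T, D_1 \rangle \tensor \langle T, 1 \rangle) = \langle F, D_\infty \rangle + \langle T, D_1 \rangle = \langle T, D_\infty + D_1 \rangle$. Now $D_1 + D_1$ has two pairs of distinct points at finite distance while $D_\infty + D_1$ has only one, so these are not isomorphic; hence $\langle T, D_1 \rangle \tensor (-)$ does not preserve this coproduct, has no right adjoint, and $\X \ltimes \C$ is not left closed, contradicting the conjecture.

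There is no real obstacle here, only care: one should check that colimits in $\X \ltimes \C$ are indeed coordinatewise (immediate from the underlying category being $\X \times \C$) and keep the small coproducts and additive tensors of generalised metric spaces straight. It is worth noting explicitly why a sequential colimit of the kind used against Conjecture (\ref{conjecture:right-closed}) will not do: such a diagram has constant first coordinate, and the degeneration $D_1^F = D_\infty$ only has an effect when the first coordinate actually moves from $F$ to $T$ --- which is exactly what the binary coproduct with one summand at $T$ arranges.
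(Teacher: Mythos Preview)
Your disproof is correct and follows essentially the same approach as the paper: work in Example~(\ref{example:counterexample-closed}) and exhibit a binary coproduct that is not preserved by tensoring on the left. The paper's choice of summands is slightly leaner --- it takes $\langle F, 1 \rangle$ and $\langle T, 0 \rangle$ (with $0$ the empty metric space) and tensors by a general $\langle x, M \rangle$, so that the final comparison reduces directly to $M^T$ versus $M^F$ rather than requiring you to distinguish the four-point spaces $D_1 + D_1$ and $D_\infty + D_1$.
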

However, this conjecture is false, as we will now show.
Consider Example (\ref{example:counterexample-closed}); we will show that tensoring on the left by an arbitrary object $\langle x, M \rangle$ does not preserve coproducts.
Consider the following coproduct diagram.
\[ \langle F, 1 \rangle \qquad \langle T, 0 \rangle \]
The colimit of this diagram is the object $\langle T, 1 \rangle$.
The image of this diagram under the functor $(\langle x, M \rangle \tensor -)$ is the following diagram.
\[ \langle F, M^F \rangle \qquad \langle x, 0 \rangle \]
The colimit of this diagram is the object $\langle x, M^F \rangle$.
Thus, if tensoring on the left by $\langle x, M \rangle$ were to preserve colimits, we would expect an isomorphism of the following form.
\[ \langle x, M \rangle \tensor \langle T, 1 \rangle \iso \langle x, M^F \rangle \]
However, the left hand side of this equation evaluates as follows.
\[ \langle x, M \rangle \tensor \langle T, 1 \rangle = \langle x, M^T \rangle \]
Thus, no such isomorphism exists, so tensoring on the left by $\langle x, M \rangle$ cannot preserve colimits, and so $\X \ltimes \C$ cannot be left closed.
This provides a counterexample to Conjecture (\ref{conjecture:left-closed}).

We will now produce some examples of semidirect product monoidal categories which are left closed but not right closed.

Let $\X$ be a left closed monoidal category.
Let $\C$ be a cocartesian monoidal category.
Let there be a strong action of $\X$ on $\C$.
As before, there is a semidirect product monoidal category, $\X \ltimes \C$, with tensor product defined as follows.
\[ \langle X, B \rangle \tensor \langle Y, C \rangle = \langle X \tensor Y, B^Y + C \rangle \]
In addition to this, assume that $\X$ has finite products and that there is another functor
\[ (- \rhd -) \colon \C^\op \times \C \rightarrow \X \]
and a natural isomorphism of hom sets of the following form.
\[ \C (B^X, C) \iso \X (X, B \rhd C) \]
We claim that, under these conditions, the monoidal category $\X \ltimes \C$ is left closed.

\begin{theorem}
	The monoidal category $\X \ltimes \C$ is left closed, with internal hom defined as follows.
	\[ [\langle X, A \rangle, \langle Z, C \rangle] = \langle [X, Z] \times (A \rhd C), C \rangle \]
\end{theorem}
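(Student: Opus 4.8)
The plan is to imitate the proof of Theorem~\ref{theorem:right-closed}: for each object $\langle X, A \rangle$, produce a chain of natural isomorphisms of hom sets exhibiting $[\langle X, A \rangle, -]$ as a right adjoint to the functor $\langle X, A \rangle \tensor -$. As in the earlier proofs, it then follows by uniqueness of adjoints (equivalently, by a parametrised Yoneda argument) that these adjunctions assemble into an internal-hom functor with the claimed object assignment, so that $\X \ltimes \C$ is left closed in the sense defined above. No associator, unitor or coherence morphisms need to be displayed, since we are only asserting the existence of a right adjoint and computing its value on objects.

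First I would record that, since $\C$ is cocartesian, its monoidal product is $+$, so the semidirect-product tensor of Definition~\ref{definition:semidirect-product} here reads $\langle X, B \rangle \tensor \langle Y, C \rangle = \langle X \tensor Y, B^Y + C \rangle$, as already noted. Starting from $(\X \ltimes \C)(\langle X, A \rangle \tensor \langle Y, B \rangle, \langle Z, C \rangle)$, the steps are: expand it, using that $\X \ltimes \C$ is the product category $\X \times \C$, as $\X(X \tensor Y, Z) \times \C(A^Y + B, C)$; split the second factor as $\C(A^Y, C) \times \C(B, C)$ by the universal property of the coproduct in $\C$; apply the hom-tensor adjunction of $\X$ to rewrite $\X(X \tensor Y, Z) \iso \X(Y, [X, Z])$, and the hypothesised isomorphism $\C(B^X, C) \iso \X(X, B \rhd C)$ to rewrite $\C(A^Y, C) \iso \X(Y, A \rhd C)$; merge the two resulting factors of the form $\X(Y, -)$ into $\X(Y, [X, Z] \times (A \rhd C))$ using that products in $\X$ are genuine categorical products; and finally reassemble the result as $(\X \ltimes \C)(\langle Y, B \rangle, \langle [X, Z] \times (A \rhd C), C \rangle)$, which is by definition $(\X \ltimes \C)(\langle Y, B \rangle, [\langle X, A \rangle, \langle Z, C \rangle])$.

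The only real work is bookkeeping of naturality and variances. Each isomorphism in the chain is natural in the appropriate arguments: the decomposition of a hom set in a product category is natural; the coproduct splitting is natural, contravariant in $B$ and covariant in $C$; the adjunction $\X(X \tensor Y, Z) \iso \X(Y, [X, Z])$ is natural in $Y$ and $Z$ with $X$ fixed; the isomorphism $\C(B^X, C) \iso \X(X, B \rhd C)$ is natural by hypothesis; and $\X(Y, P) \times \X(Y, Q) \iso \X(Y, P \times Q)$ is natural. Composing them yields the desired isomorphism, natural in $\langle Y, B \rangle$ and $\langle Z, C \rangle$, with $\langle X, A \rangle$ held fixed throughout. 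The point requiring a little care is simply to keep $A$ and $X$ as parameters rather than variables, to track that $Y$ enters the left-hand side contravariantly (through $X \tensor Y$ and through $A^Y$, since $\Gamma$ is covariant so $A^{(-)}$ is covariant), and to note that the two $\X(Y,-)$ hom sets may be combined precisely because the relevant limit in $\X$ is an honest product.

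I expect no serious obstacle: this statement is a left-handed cousin of Theorem~\ref{theorem:right-closed} and goes through for essentially the same reason — the failure of Conjecture~\ref{conjecture:left-closed} notwithstanding — because the extra hypotheses (that $\C$ is cocartesian, that $\X$ has finite products, and that each $(-)^X$ is ``corepresentable into $\X$'' via $\rhd$) are exactly what is needed to turn the coproduct $A^Y + B$ and the twisted hom set $\C(A^Y, C)$ into data that can be absorbed into a single hom set out of $\langle Y, B \rangle$.
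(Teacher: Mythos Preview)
Your proposal is correct and follows essentially the same approach as the paper: both display the identical chain of natural isomorphisms of hom sets, expanding the tensor, splitting the coproduct in $\C$, applying the left-closed adjunction in $\X$ and the hypothesised $\rhd$-isomorphism, and then merging via the product in $\X$. Your additional remarks on naturality and variance are sound and slightly more explicit than the paper's treatment, but the underlying argument is the same.
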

\begin{proof}
	We have the following natural isomorphism of hom sets.
	\begin{align*}
		(\X \ltimes \C) (\langle X, A \rangle \tensor \langle Y, B \rangle, \langle Z, C \rangle)
		&= (\X \ltimes \C) (\langle X \tensor Y, A^Y + B \rangle, \langle Z, C \rangle) \\
		&= \X (X \tensor Y, Z) \times \C (A^Y + B, C) \\
		&\iso \X (X \tensor Y, Z) \times \C (A^Y, C) \times \C (B, C) \\
		&\iso \X (Y, [X, Z]) \times \X (Y, A \rhd C) \times \C (B, C) \\
		&\iso \X (Y, [X, Z] \times (A \rhd C)) \times \C (B, C) \\
		&= (\X \ltimes \C) (\langle Y, B \rangle, \langle [X, Z] \times (A \rhd C), C \rangle) \\
		&= (\X \ltimes \C) (\langle Y, B \rangle, [\langle X, A \rangle, \langle Z, C \rangle])
	\end{align*}
\end{proof}

However, the monoidal category $\X \ltimes \C$ is not, in general, right closed, as we will now show.
In any right closed monoidal category, tensoring on the right with a fixed object has a right adjoint, and thus preserves the initial object.
So, if $\X \ltimes \C$ were a right closed monoidal category, then we would necessarily have isomorphisms of the following form.
\[ 0_{\X \ltimes \C} \tensor \langle X, C \rangle \iso 0_{\X \ltimes \C} \]
Evaluating each side of this equation gives the following.
\[ \langle 0_\X \tensor X, C \rangle \iso \langle 0_\X, 0_\C \rangle \]
So, we would necessarily have isomorphisms of the following forms.
\[ 0_\X \tensor X \iso 0_\X \qquad C \iso 0_\C \]
The first may exist, but the second will not, unless $\C$ is trivial.

We will now give some examples of semidirect product monoidal categories which are left closed but not right closed.

\begin{example}
	Let $\X$ be a left closed monoidal category with finite products and finite coproducts, in which the tensor product preserves coproducts in both variables.
	Let $\C$ be $\X$, considered as a cocartesian monoidal category.
	Then there is a strong action of $\X$ on $\C$ using the original tensor product, as follows.
	\[ B^X = B \tensor X \]
	Let $(- \rhd -)$ be the original internal hom, as follows.
	\[ B \rhd C = [B, C] \]
	The isomorphism of hom sets
	\[ \C (B^X, C) \iso \X (X, B \rhd C) \]
	is then just the usual hom-tensor adjunction.
	The resulting semidirect product $\X \ltimes \C$ is a left closed monoidal structure on $\X \times \X$, with tensor product and internal hom defined as follows.
	\[ \langle X, A \rangle \tensor \langle Y, B \rangle = \langle X \tensor Y, (A \tensor Y) + B \rangle \]
	\[ [\langle X, A \rangle, \langle Z, C \rangle] = \langle [X, Z] \times [A, C], C \rangle \]
\end{example}

\begin{example}
	Let $\X$ be $\Set$, the category of sets.
	Let $\C$ be a category with small coproducts, considered as a cocartesian monoidal category.
	Then there is a strong action of $\X$ on $\C$ by copowers, as follows.
	\[ C^X = \coprod_{x \in X} C \]
	The notation we have been using so far agrees with the notation usually used for powers, rather than copowers; this is unfortunate, but hopefully not too confusing.
	Let $(- \rhd -)$ be the usual hom-functor, as follows.
	\[ B \rhd C = \C (B, C) \]
	The isomorphism of hom sets
	\[ \C (B^X, C) \iso \X (X, B \rhd C) \]
	is then just the universal property of the copower.
	The resulting semidirect product $\X \ltimes \C$ is a left closed monoidal structure on $\Set \times \C$, with tensor product and internal hom defined as follows.
	\[ \langle X, A \rangle \tensor \langle Y, B \rangle = \langle X \times Y, (\coprod_{y \in Y} A) + B \rangle \]
	\[ [\langle X, A \rangle, \langle Z, C \rangle] = \langle \Set (X, Z) \times \C (A, C), C \rangle \]
\end{example}

\begin{example}
	As a specific case of the previous example, let $\C$ be a complete lattice, considered as a preorder.
	In this category, the coproduct of $a$ and $b$ is their join, or least upper bound, denoted $a \vee b$.
	The resulting semidirect product $\X \ltimes \C$ is a left closed monoidal structure on $\Set \times \C$, with tensor product and internal hom defined as follows.
	\[ \langle X, a \rangle \tensor \langle Y, b \rangle =
	\begin{cases}
		\langle X \times Y, a \vee b \rangle & \text{if}~ Y ~\text{is non-empty} \\
		\langle \emptyset, b \rangle & \text{if}~ Y ~\text{is empty}
	\end{cases} \]
	\[ [\langle X, a \rangle, \langle Z, c \rangle] =
	\begin{cases}
		\langle \Set (X, Z), c \rangle & \text{if}~ a \leq c \\
		\langle \emptyset, c \rangle & \text{if}~ a > c
	\end{cases} \]
\end{example}


\bibliographystyle{plain}

\bibliography{bibliography}

\end{document}